\newtheorem{theorem}{Theorem}[section]
\newtheorem*{theorem*}{Theorem}
\newtheorem{lemma}[theorem]{Lemma}
\newtheorem*{conjecture}{Conjecture}
\newtheorem*{remark}{Remark}
\numberwithin{equation}{section}
\DeclareSymbolFont{cyrletters}{OT2}{wncyr}{m}{n}\DeclareMathSymbol{\Sha}{\mathalpha}{cyrletters}{"58}
\newcommand{\sm}{\left(\begin{smallmatrix}}
\newcommand{\esm}{\end{smallmatrix}\right)}
\newcommand{\mat}{\left(\begin{matrix}}
\newcommand{\emat}{\end{matrix}\right)}
\def\det{\mathrm{det}}
\def\Tr{\mathop{\rm Tr}}
\def\re{\mathrm{Re}}
\def\Gal{\mathrm{Gal}}
\def\Frob{\mathrm{Frob}}
\newcommand{\Tam}{\operatorname{Tam}}
\newcommand{\Directsum}{\bigoplus}
\def\GL{\mathrm{GL}}
\def\SL{\mathrm{SL}}
\def\Q{\mathbb Q} \def\R{\mathbb R} \def\Z{\mathbb Z} \def\C{\mathbb C}
\newcommand{\beq}{\begin{eqnarray*}}
\newcommand{\eeq}{\end{eqnarray*}}
\newcommand{\beqn}{\begin{eqnarray}}
\newcommand{\eeqn}{\end{eqnarray}}
\newcommand{\ben}{\begin{enumerate}}
\newcommand{\een}{\end{enumerate}}
\begin{document}

\title{Zeta-polynomials for modular form periods}

\author{}

\address{}
\email{}
\author{Ken Ono}

\address{Department of Mathematics and Computer Science, Emory University, Atlanta, Georgia 30322}
\email{ono@mathcs.emory.edu}

\author{Larry Rolen}

\address{Department of Mathematics, Penn State University, University Park, Pennsylvania 16802}
\email{larryrolen@psu.edu}

\author{Florian Sprung}
\address{Department of Mathematics, Princeton University, Princeton, New Jersey 08544}
\email{fsprung@princeton.edu}

\begin{abstract}  Answering problems of Manin, we use the critical $L$-values of
even weight $k\geq 4$ newforms $f\in S_k(\Gamma_0(N))$  to define zeta-polynomials $Z_f(s)$
which satisfy the functional
equation $Z_f(s)=\pm Z_f(1-s)$, and which obey the Riemann Hypothesis: if $Z_f(\rho)=0$, then $\re(\rho)=1/2$. The zeros of the $Z_f(s)$ on the critical line in $t$-aspect are distributed in a manner which is somewhat analogous to those of classical zeta-functions. 
These polynomials are assembled using (signed) Stirling numbers and  ``weighted moments'' of critical $L$-values. In analogy with Ehrhart polynomials which keep track of integer points in polytopes, the $Z_f(s)$ keep track of
arithmetic information.
Assuming the Bloch--Kato Tamagawa Number Conjecture, they encode  the arithmetic of a combinatorial arithmetic-geometric object which we call the ``Bloch-Kato complex'' for $f$. Loosely speaking, these are graded sums of weighted moments of orders of \v{S}afarevi\v{c}--Tate groups associated to the Tate twists of the modular motives.
\end{abstract}

\thanks{The first author thanks the support of the Asa Griggs Candler Fund and the NSF. The authors thank Masataka Chida, Yuri Manin and Don Zagier for useful discussions and correspondence. This material is based upon work supported by the National Science Foundation under grants DMS-0964844, DMS-1601306 and DMS-1128155. Any opinions, findings and conclusions or recommendations expressed in this material are those of the authors and do not necessarily reflect the views of the National Science Foundation.}
\keywords{period polynomials, modular forms, zeta-polynomials, Ehrhart polynomials, Bloch-Kato complex}
\thanks{2010
 Mathematics Subject Classification: 11F11, 11F67}

\maketitle

\section{Introduction and Statement of Results} \label{section1}

\noindent Let $f \in S_k(\Gamma_0(N))$ be a newform  of even weight $k$ and level $N$.  Associated to $f$ is its $L$-function $L(f,s)$, which may be normalized so that the completed $L$-function 
$$
\Lambda(f,s) := \Big(\frac{\sqrt{N}}{2\pi}\Big)^s \Gamma(s) L(f,s),   
$$ 
satisfies the functional equation $\Lambda(f,s) = \epsilon(f) \Lambda(f,k-s)$, with $\epsilon(f)= \pm 1$.  The {\it critical $L$-values} are the complex numbers
$L(f,1)$, $L(f,2)$, $\dots$, $L(f,k-1)$.  

In a recent paper \cite{M}, Manin speculated on the existence of natural {\it zeta-polynomials} which can be canonically assembled
from these critical values.  A polynomial $Z(s)$ is a {\it zeta-polynomial}  if it is arithmetic-geometric in origin, satisfies
a functional equation of the form
$$Z(s)=\pm Z(1-s)$$ and obeys the Riemann Hypothesis: 
if $Z(\rho)=0$, then $\re(\rho)=1/2$.

Here we confirm his speculation.
To this end, we define the $m$-th {\it weighted moments} of critical values
\begin{equation}\label{Moments}
M_f(m):=\sum_{j=0}^{k-2}\left(\frac{\sqrt N}{2\pi}\right)^{j+1}\frac{L(f,j+1)}{(k-2-j)!}j^m=\frac1{(k-2)!}\sum_{j=0}^{k-2}\binom{k-2}j\Lambda(f,j+1)j^m.
\end{equation}
For positive integers $n$, we recall the usual generating function for
the (signed) Stirling numbers of the first kind
\begin{equation}
(x)_n=x(x-1)(x-2)\cdots (x-n+1)=:\sum_{m=0}^n s(n,m)x^m.
\end{equation}
Using these numbers we define
the zeta-polynomial for these weighted moments by 
\begin{equation}\label{Zdef}
Z_f(s):=\epsilon(f)\cdot\sum_{h=0}^{k-2} (-s)^h \sum_{m=0}^{k-2-h} \binom{m+h}{h}\cdot s(k-2,m+h)\cdot M_f(m).
\end{equation}

To be a zeta-polynomial in the sense of Manin \cite{M}, we must show that $Z_f(s)$ satisfies a functional equation and the Riemann Hypothesis.
Our first result confirms these properties.

\begin{theorem}\label{Thm1} If $f\in S_k(\Gamma_0(N))$ is an even weight $k\geq 4$ newform, then the following are true:
\begin{enumerate}
\item For all $s\in \C$ we have that $Z_f(s)=\epsilon(f)Z_f(1-s)$.
\item If $Z_f(\rho)=0$, then $\re(\rho)=1/2$.
\end{enumerate}
\end{theorem}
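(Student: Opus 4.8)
The first step is to collapse the double sum defining $Z_f(s)$ into a single closed form. Substituting the right-hand expression for $M_f(m)$ from \eqref{Moments} into \eqref{Zdef}, interchanging the order of summation, and setting $n=m+h$, the inner sum over $h$ becomes $\sum_{h=0}^{n}\binom{n}{h}(-s)^h j^{\,n-h}=(j-s)^n$ by the binomial theorem. The remaining sum over $n$ is then $\sum_{n=0}^{k-2}s(k-2,n)(j-s)^n=(j-s)(j-s-1)\cdots(j-s-k+3)=(k-2)!\binom{j-s}{k-2}$ by the defining generating function of the Stirling numbers, and the factor $(k-2)!$ cancels the one hidden in $M_f(m)$. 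This yields the compact formula
\begin{equation}\label{Zclosed}
Z_f(s)=\epsilon(f)\sum_{j=0}^{k-2}\binom{k-2}{j}\Lambda(f,j+1)\binom{j-s}{k-2},
\end{equation}
which will be the basis for both parts.

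For part (1), I would compute $Z_f(1-s)$ directly from \eqref{Zclosed} and reindex by $j\mapsto k-2-j$. The symmetry $\binom{k-2}{k-2-j}=\binom{k-2}{j}$ together with the functional equation $\Lambda(f,k-1-j)=\epsilon(f)\Lambda(f,j+1)$ produces a factor of $\epsilon(f)$, while the binomial reflection identity $\binom{x}{d}=(-1)^d\binom{d-1-x}{d}$ applied with $d=k-2$ turns $\binom{s+k-3-j}{k-2}$ into $(-1)^{k-2}\binom{j-s}{k-2}=\binom{j-s}{k-2}$, the last equality because $k$ is even. Reassembling gives $Z_f(1-s)=\epsilon(f)Z_f(s)$, which is the claimed functional equation since $\epsilon(f)=\pm 1$.

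Part (2) is the heart of the matter, and I would reduce it to the Riemann Hypothesis for period polynomials. Consider $P(w):=\sum_{j=0}^{k-2}\binom{k-2}{j}\Lambda(f,j+1)w^j$, whose coefficients are exactly those appearing in \eqref{Zclosed}; the functional equation of $\Lambda$ makes these coefficients palindromic (if $\epsilon(f)=1$) or anti-palindromic (if $\epsilon(f)=-1$), so $P$ is self-inversive. A short computation using the Mellin relation $\int_0^{i\infty}f(\tau)\tau^j\,d\tau=i^{\,j+1}N^{-(j+1)/2}\Lambda(f,j+1)$ identifies $P$, after the substitution $w=-i/(\sqrt{N}\,z)$ and multiplication by a monomial, with the classical period polynomial $r_f(z)=\int_0^{i\infty}f(\tau)(\tau-z)^{k-2}\,d\tau$. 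By the theorem of Jin, Ma, Ono and Soundararajan every zero of $r_f$ lies on the circle $|z|=1/\sqrt{N}$, so every zero of $P$ lies on the unit circle $|w|=1$. It remains to transfer this to the critical line: writing $\binom{j-s}{k-2}=\binom{(-s)+j}{k-2}$, formula \eqref{Zclosed} exhibits $Z_f(s)$ (up to the sign $\epsilon(f)$) as the image of $P$ under the Rodriguez-Villegas transform $w^j\mapsto\binom{(-s)+j}{k-2}$. A theorem of Rodriguez-Villegas guarantees that this transform carries a self-inversive polynomial with all zeros on $|w|=1$ to one all of whose zeros lie on the line $\re(-s)=-1/2$, that is, $\re(s)=1/2$, which is precisely assertion (2).

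The main obstacle is assembling the two analytic inputs to part (2)—the Riemann Hypothesis for period polynomials and the Rodriguez-Villegas unit-circle-to-critical-line principle—and checking that the normalizations align so that $P$ is genuinely a rescaling of $r_f$ and that the transform has the stated form. A related bookkeeping point concerns the degree: the leading coefficient of $Z_f$ is proportional to $P(1)=\sum_{j=0}^{k-2}\binom{k-2}{j}\Lambda(f,j+1)$, which vanishes when $\epsilon(f)=-1$ (the anti-palindromic case), so that the degree of $Z_f$ drops by one. One must check that the single zero of $P$ lost in this way is exactly the zero at $w=1$, so that each remaining zero still lands on $\re(s)=1/2$ and assertion (2) is unaffected. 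By contrast, the simplification to \eqref{Zclosed} and the functional equation in part (1) are purely formal once the substitution $n=m+h$ is made.
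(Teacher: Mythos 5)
Your closed form
\[
Z_f(s)=\epsilon(f)\sum_{j=0}^{k-2}\binom{k-2}{j}\Lambda(f,j+1)\binom{j-s}{k-2}
\]
is correct, and it is exactly the polynomial identity underlying the paper's Theorem \ref{Thm2} (the paper derives the same expression for the coefficients $h_n$ of $R_f(z)/(1-z)^{k-1}$ and identifies them with $Z_f(-n)$; your formula is that identity with $-n$ replaced by $s$). Your plan for part (2) is then precisely the paper's: feed the unit-circle theorem of \cite{JinMaOnoSound} (Theorem \ref{RHPP}) into the Rodriguez--Villegas transform (Theorem \ref{RVThm}). Where you genuinely differ is part (1): the paper deduces the functional equation from Theorem \ref{RVThm}, so its proof of (1) rests on the unit-circle property of $R_f$ and on the behavior of $R_f$ at $z=1$; your proof via the reindexing $j\mapsto k-2-j$ and the reflection identity $\binom{x}{d}=(-1)^d\binom{d-1-x}{d}$ is purely formal, using only the functional equation of $\Lambda(f,s)$ and the evenness of $k$. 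That is a cleaner and more self-contained route to (1).

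The gap is in part (2). Theorem \ref{RVThm} carries the hypothesis $U(1)\neq 0$, and your invocation of it suppresses that hypothesis: when $\epsilon(f)=-1$ the polynomial $P=\epsilon(f)R_f$ \emph{does} vanish at $w=1$ (anti-palindromy forces it), so the theorem cannot be applied to $P$ as it stands. You flag this (``one must check that the single zero of $P$ lost in this way is exactly the zero at $w=1$'') but do not carry out the check, and as you have framed it the check is not mere bookkeeping: knowing that the zero at $w=1$ is \emph{simple} (equivalently, that $\deg Z_f$ drops by exactly one), and that $P(1)\neq0$ when $\epsilon(f)=+1$, is the paper's Lemma \ref{ValueOneNotZeroPP}, whose proof uses Waldspurger's theorem ($\Lambda(f,k/2)\geq 0$), the monotonicity chain \eqref{ChainIneqCritVal} from \cite{JinMaOnoSound}, and the fact that not all periods of $f$ vanish --- genuine arithmetic input that your write-up never supplies. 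I note that your overall strategy \emph{can} be completed without this arithmetic: write $R_f(z)=(1-z)^mQ(z)$ with $Q(1)\neq 0$, observe that $R_f(z)/(1-z)^{k-1}=Q(z)/(1-z)^{\deg Q+1}$, that all zeros of $Q$ lie on the unit circle by Theorem \ref{RHPP}, and that Theorem \ref{RVThm} applied to $Q$ produces a polynomial agreeing with $Z_f(-n)$ for every $n\geq 0$, hence equal to $Z_f$ after $s\mapsto -s$; this puts all zeros of $Z_f$ on $\re(s)=1/2$ whatever the multiplicity $m$ is (and, since your part (1) is independent of Theorem \ref{RVThm}, you would never need Lemma \ref{ValueOneNotZeroPP} at all). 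But that dévissage, or else the content of Lemma \ref{ValueOneNotZeroPP}, must actually appear; as written, the application of the Rodriguez--Villegas theorem is unjustified in the case $\epsilon(f)=-1$.
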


It is natural to study the distribution of the zeros of $Z_f(s)$ on the line $\re(s)=1/2$. 
Although the $Z_f(s)$ are polynomials, do their zeros behave in a manner which is analogous to
the zeros of the Riemann zeta-function $\zeta(s)$? Namely, how are their zeros distributed in comparison with the growth of
$$N(T):= \# \{  \rho=s+it\ : \ \zeta(\rho)=\frac12 \ \ {\text {\rm with}}\ 0<t\leq T\}, 
$$
which is well known to satisfy
\begin{equation}\label{NT}
N(T)=\frac{T}{2\pi} \log\frac{T}{2\pi} -\frac{T}{2\pi}+O(\log T)?
\end{equation}
We find that the zeros of $Z_f(s)$ 
behave in a manner that is somewhat analogous to (\ref{NT}) in terms of its highest zero.

To make this precise, we find it useful to compare the $Z_f(s)$ with two families of combinatorial polynomials.
In what follows, we note that for $x,y\in\C$, the binomial coefficient $\binom xy$ is defined by 
\[
\binom{x}{y}:=\frac{\Gamma(x+1)}{\Gamma(y+1)\Gamma(x-y+1)}.
\]
We find that the $Z_f(s)$, depending on $\epsilon(f)$, can naturally be compared with the polynomials
\begin{equation}
H_{k}^{+}(s):=\binom{s+k-2}{k-2}+\binom{s}{k-2}
,
\end{equation}
\begin{equation}
H_{k}^{-}(s):=\sum_{j=0}^{k-3}\binom{s-j+k-3}{k-3}.
\end{equation}

\begin{theorem}\label{ZeroDistribution}
Assuming the notation and hypotheses above, the following are true: 
\begin{enumerate}
\item The zeros of $H^{-}_{k}(-s)$ lie on the line $\re(s)=1/2$,  and they are the complex numbers $\rho=\frac{1}{2}+it$ 
where the $t$ are the real numbers such that the value of the monotonically decreasing function 
\[
h_k(t):=\sum_{j=0}^{k-3}\cot^{-1}\left(\frac{2t}{2j+1}\right)
\]
lies in the set $\{\pi,2\pi,\ldots,(k-3)\pi \}$. Similarly, the zeros of $H^{+}_{k}(-s)$ lie on the line $\re(s)=1/2$ and have imaginary parts $t$ which may be found by solving for $h_k(t)$ to lie in the set $\{\pi/2,3\pi/2,\ldots,(k-5/2)\pi \}$.
Moreover, as $k\rightarrow\infty$, the highest pair of complex conjugate roots (i.e., those whose imaginary parts have the largest absolute values) of $H^{-}_k(s)$ have imaginary part equal in absolute value to
\[
\frac{(k-3)(k-1)}{2\pi}+O(1)
,
\]
and the height of the highest roots of $H^+_k(s)$ is 
\[
\frac{(k-3)(k-1)}{\pi}+O(1)
.
\]
\item Let $f\in S_4(\Gamma_0(N))$ be a newform. If $\epsilon(f)=-1$, then the only root of $Z_f(s)$ is at $s=1/2$. If $\epsilon(f)=1$, then there are two roots of $Z_f(s)$, and as $N\rightarrow\infty$, their roots converge on the sixth order roots of unity $\operatorname{exp}(\pm\pi i /3)$. 
\item For fixed $k\geq6$, as $N\rightarrow +\infty$, the zeros of $Z_f(s)$ for newforms $f\in S_k(\Gamma_0(N))$ with $\epsilon(f)=\pm 1$ converge to the zeros of $H^{\pm}_{k}(-s)$. 
Moreover, for all $k,N$, if $\epsilon(f)=1$ {\text {\rm (resp. $\epsilon(f)=-1$)}}, then the imaginary part of the largest root is strictly bounded by 
$(k-3)\left(k-\frac 72\right)$  {\text {\rm (resp. 
$(k-4)\left(k-\frac 92\right)$)}}.
\end{enumerate}
\end{theorem}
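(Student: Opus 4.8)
\emph{Part (1): the combinatorial polynomials.} The plan is first to put $H_k^{+}$ and $H_k^{-}$ on equal footing. Using the Pascal relation $\binom{r}{k-3}=\binom{r+1}{k-2}-\binom{r}{k-2}$, the defining sum for $H_k^{-}$ telescopes to the closed form $H_k^{-}(u)=\binom{u+k-2}{k-2}-\binom{u}{k-2}$, so that $H_k^{\pm}(u)=\binom{u+k-2}{k-2}\pm\binom{u}{k-2}$. Applying the reflection identity $\binom{x}{n}=(-1)^n\binom{n-1-x}{n}$ to both terms yields
\[
H_k^{\pm}(-s)=(-1)^{k-2}\left[\binom{s-1}{k-2}\pm\binom{s+k-3}{k-2}\right].
\]
The key observation is that $\binom{s-1}{k-2}\big/\binom{s+k-3}{k-2}=\prod_{\ell=1}^{k-2}\frac{s-\ell}{s+\ell-1}$ has modulus $1$ on the line $s=\tfrac12+it$, since there each factor equals $\frac{-(\ell-\frac12)+it}{(\ell-\frac12)+it}$, whose numerator is $-1$ times the conjugate of its denominator. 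Writing $s=\tfrac12+it$, the argument of this ratio is exactly $2h_k(t)$, because the $\ell$-th factor contributes $\pi-2\arctan\frac{t}{\ell-\frac12}=2\cot^{-1}\frac{2t}{2\ell-1}$. Hence $H_k^{-}(-s)$ (resp.\ $H_k^{+}(-s)$) vanishes precisely where $e^{2ih_k(t)}=1$ (resp.\ $=-1$), i.e.\ where $h_k(t)\in\pi\Z$ (resp.\ $\in\tfrac\pi2+\pi\Z$). Since $h_k$ is strictly decreasing with $h_k(t)\to 0$ as $t\to+\infty$ and $h_k(t)\to(k-2)\pi$ as $t\to-\infty$, the intermediate value theorem produces exactly one root for each admissible value, namely the set $\{\pi,\dots,(k-3)\pi\}$ for $H_k^{-}$ ($k-3$ roots, matching its degree) and $\{\tfrac\pi2,\dots,(k-\tfrac52)\pi\}$ for $H_k^{+}$ ($k-2$ roots). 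This simultaneously places the zeros on $\re(s)=\tfrac12$ and gives their exact description. For the highest roots I would solve $h_k(t)=\pi$ (resp.\ $\tfrac\pi2$) for the largest $t$: expanding $\cot^{-1}(x)=x^{-1}+O(x^{-3})$ and using $\sum_{j=0}^{k-3}(2j+1)=(k-2)^2$ gives $h_k(t)=\tfrac{(k-2)^2}{2t}\bigl(1+O(t^{-2})\bigr)$, so $t=\tfrac{(k-2)^2}{2\pi}+O(1)=\tfrac{(k-3)(k-1)}{2\pi}+O(1)$ for $H_k^{-}$ and twice this for $H_k^{+}$.

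\emph{Parts (2) and (3): limiting location of the zeros of $Z_f$.} The heart is the behaviour of the weighted moments as $N\to\infty$. The functional equation $\Lambda(f,j+1)=\epsilon(f)\Lambda(f,k-1-j)$ shows that the two extreme terms of $M_f(m)$, at $j=k-2$ and $j=0$, both have size $(\sqrt N/2\pi)^{k-1}$ and are linked by $\Lambda(f,1)=\epsilon(f)\Lambda(f,k-1)$, while every intermediate term carries a strictly smaller power of $\sqrt N$. Dividing by $\Lambda(f,k-1)/(k-2)!$ and using standard bounds on the critical values to discard the intermediate terms, I would show the normalized moments satisfy $\widehat M_f(m)\to (k-2)^m+\epsilon(f)\,\delta_{m,0}$. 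Substituting this limit into \eqref{Zdef}, collapsing the inner sum by the binomial theorem and then summing via the Stirling generating function $(x)_{k-2}=\sum_m s(k-2,m)x^m$, the $(k-2)^m$ piece assembles to $(-1)^{k-2}(k-2)!\binom{s-1}{k-2}$ and the $\epsilon(f)\delta_{m,0}$ piece to $\epsilon(f)(-1)^{k-2}(k-2)!\binom{s+k-3}{k-2}$, whose sum is exactly $(k-2)!\,H_k^{\epsilon(f)}(-s)$. By Hurwitz's theorem the zeros of $Z_f$ then converge to those of $H_k^{\pm}(-s)$, proving the convergence statement in part (3). Specializing $k=4$, where $H_4^{-}(-s)=1-2s$ and $H_4^{+}(-s)=s^2-s+1$ with roots $\tfrac12$ and $e^{\pm\pi i/3}$, yields part (2); the one extra point is that for $\epsilon(f)=-1$ the identity $M_f(0)=0$ (the extreme terms cancel through the functional equation) holds for \emph{every} $N$, forcing the leading coefficient of $Z_f$ to vanish so that $Z_f$ is linear with its unique root exactly at $s=\tfrac12$.

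\emph{The uniform height bound: the main obstacle.} The genuinely delicate point is the bound valid for all $N$ and $k$, which cannot be extracted from the $N\to\infty$ limit. Here I would invoke Theorem~\ref{Thm1}, which already places every zero of $Z_f$ on $\re(s)=\tfrac12$, and then control the total variation of $\arg Z_f(\tfrac12+it)$ directly. Using the symmetry from Theorem~\ref{Thm1} to write $Z_f(\tfrac12+it)$ as a fixed unimodular factor times a real function of $t$ built from the functional-equation-symmetrized moments, the count of zeros up to height $T$ is governed by an argument sum of the same $\cot^{-1}$ shape as $h_k$; bounding this sum and forcing it below the first relevant threshold for $|t|$ exceeding $(k-3)(k-\tfrac72)$ (resp.\ $(k-4)(k-\tfrac92)$) would give the stated strict bounds. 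The crux, and the step I expect to be hardest, is controlling the subdominant critical-value contributions \emph{uniformly} in $N$: the asymptotic argument of the previous paragraph only controls them in the limit, so one needs a level-independent estimate on the $\Lambda(f,j+1)$ (via convexity and sign/positivity information) guaranteeing that the perturbation of $Z_f$ away from $H_k^{\pm}(-s)$ never pushes a zero past these explicit heights.
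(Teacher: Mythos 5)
Your part (1) is correct: the telescoping closed form $H_k^{-}(u)=\binom{u+k-2}{k-2}-\binom{u}{k-2}$, the reflection to $(-1)^{k-2}\bigl[\binom{s-1}{k-2}\pm\binom{s+k-3}{k-2}\bigr]$, the unimodularity of the ratio on $\re(s)=1/2$ with argument $2h_k(t)$, and the degree count that shows the on-line roots exhaust all roots are all sound, as is the asymptotic for the highest root. This is essentially a self-contained reconstruction of the Bey--Henk--Wills argument that the paper simply cites \cite{BHW}. For parts (2) and (3), your convergence argument takes a genuinely different route from the paper: the paper invokes Theorem 1.2 of \cite{JinMaOnoSound}, which locates the zeros of $R_f(z)$ near the $(k-2)$-th roots of $\mp 1$ as $N\to\infty$, and transfers this through the Rodriguez--Villegas transform; you instead show directly that only the $j=0$ and $j=k-2$ terms of $M_f(m)$ survive as $N\to\infty$, assemble the limit into $(k-2)!\,H_k^{\epsilon(f)}(-s)$ via the Stirling identities, and apply Hurwitz. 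This works and is arguably more self-contained, though to make ``standard bounds'' precise you need a convexity bound $L(f,k/2)\ll_k N^{1/4+\epsilon}$ at the central point (the remaining interior values lie in the region of absolute convergence) and a lower bound $L(f,k-1)\gg_k 1$; your observation that $M_f(0)=0$ identically when $\epsilon(f)=-1$ is exactly what keeps $\deg Z_f=k-3$ for every $N$, so no root escapes to infinity in the Hurwitz step.

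The genuine gap is the uniform height bound at the end of part (3). Your sketch stalls, as you acknowledge, on level-uniform control of the subdominant $\Lambda(f,j+1)$ --- but that difficulty is a red herring, because no analytic uniformity in $N$ is needed at all. The paper's proof applies Theorem 1 of Braun \cite{B}, a norm bound for the roots of the Rodriguez--Villegas transform of any polynomial with nonnegative coefficients (the Ehrhart-theoretic statement that roots of such transforms lie in a disc centered at $-1/2$ whose radius depends only on the degree). The hypothesis is purely structural and holds for every level: when $\epsilon(f)=1$, all coefficients $\binom{k-2}{j}\Lambda(f,k-1-j)$ of $R_f$ are nonnegative, by Waldspurger \cite{W} together with the monotonicity chain of completed $L$-values recorded in Lemma \ref{ValueOneNotZeroPP}; when $\epsilon(f)=-1$, the quotient $R_f(z)/(1-z)$ has coefficients all of one sign (group the anti-palindromic pairs $c_j(z^j-z^{k-2-j})$ with $c_j\geq 0$ and divide), and its transform has the same zeros as $Z_f$ with the degree lowered by one. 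Since Theorem \ref{Thm1} already places every zero on $\re(s)=1/2$, Braun's disc becomes a bound on imaginary parts, with radius depending only on the degree, hence only on $k$ --- uniformly in $N$. So the missing ingredient is not an $L$-value estimate but the combination of coefficient positivity (valid for all $N$) with Braun's root bound; without it, your argument-variation strategy cannot deliver the strict, level-independent constants $(k-3)\left(k-\frac 72\right)$ and $(k-4)\left(k-\frac 92\right)$.
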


\begin{remark}
Theorem~\ref{ZeroDistribution} (2) is somewhat analogous to (\ref{NT}). Since the zeros of $Z_f(s)$ are approximated by those of  $H^{\pm}_{k}(-s)$, the analog of $N(T)$ is dictated by Theorem~\ref{ZeroDistribution} (1), where the largest zero
has imaginary part $\sim \frac{k^2}{2\pi}$ or $\sim  \frac{k^2}{\pi}$ depending on the sign of the functional equation. 
\end{remark}

By means of the ``Rodriguez-Villegas Transform''  of  \cite{RV}, Theorem~\ref{Thm1}  is naturally related to the arithmetic of period polynomials\footnote{This is a slight reformulation of the period polynomials considered in
references such as \cite{CPZ, KZ, PP, Z}.}
\begin{equation}
R_f(z):=\sum_{j=0}^{k-2} \binom{k-2}{j}\cdot \Lambda(f,k-1-j)\cdot z^j.
\end{equation}
The values of $Z_f(s)$ at non-positive integers are the coefficients expanded around $z=0$ of the rational function $$\frac{R_f(z)}{(1-z)^{k-1}}.$$
This result, which we state next, can be thought of as a natural analogue of the well-known exponential generating function of the values of the Riemann zeta function at negative integers:
\begin{equation}\label{BN}
\frac{t}{e^t-1}=1-\frac12 t+t\sum_{n=1}^{\infty}\zeta(-n)\cdot \frac{(-t)^n}{n!}
.
\end{equation}
\begin{theorem}\label{Thm2} Assuming the notation and hypotheses above, as a power series in $z$ we have
$$
\frac{R_f(z)}{(1-z)^{k-1}}=\sum_{n=0}^{\infty} Z_f(-n)z^n.
$$
\end{theorem}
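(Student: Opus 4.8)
The plan is to prove the stated power series identity coefficientwise: I will show that for every integer $n\geq 0$, the value $Z_f(-n)$ equals the coefficient of $z^n$ in $R_f(z)/(1-z)^{k-1}$, from which the equality of the two power series is immediate.

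First I would compute the right-hand side. Using the standard expansion $\frac{1}{(1-z)^{k-1}}=\sum_{n\geq 0}\binom{n+k-2}{k-2}z^n$ and writing $R_f(z)=\sum_{i=0}^{k-2}\binom{k-2}{i}\Lambda(f,k-1-i)z^i$, the coefficient of $z^n$ in the product is the finite convolution $\sum_{i}\binom{k-2}{i}\Lambda(f,k-1-i)\binom{n-i+k-2}{k-2}$. After the substitution $i\mapsto k-2-i$ this becomes the clean binomial-moment sum
$$
\sum_{j=0}^{k-2}\binom{k-2}{j}\Lambda(f,j+1)\binom{n+j}{k-2}.
$$

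Next I would evaluate the left-hand side directly. Setting $s=-n$ turns $(-s)^h$ into $n^h$; substituting the binomial form $M_f(m)=\frac{1}{(k-2)!}\sum_j\binom{k-2}{j}\Lambda(f,j+1)j^m$ and interchanging the order of summation pulls the critical values $\Lambda(f,j+1)$ out front, leaving a purely combinatorial inner sum $\sum_{h}n^h\sum_{m}\binom{m+h}{h}s(k-2,m+h)j^m$. The decisive step is to reindex by $\ell=m+h$, so that $\binom{m+h}{h}s(k-2,m+h)=\binom{\ell}{h}s(k-2,\ell)$; after swapping the $h$- and $\ell$-sums the inner $h$-sum collapses by the binomial theorem to $(n+j)^\ell$, and then the defining generating function $\sum_{\ell}s(k-2,\ell)x^\ell=(x)_{k-2}$ of the signed Stirling numbers identifies the total with the falling factorial $(n+j)_{k-2}=(k-2)!\binom{n+j}{k-2}$. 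This yields
$$
Z_f(-n)=\epsilon(f)\sum_{j=0}^{k-2}\binom{k-2}{j}\Lambda(f,j+1)\binom{n+j}{k-2}.
$$

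Comparing the two computations shows that $Z_f(-n)$ and the $z^n$-coefficient of $R_f(z)/(1-z)^{k-1}$ are the same binomial-moment sum; matching the factor $\epsilon(f)$ (using $\epsilon(f)^2=1$ together with the functional equation $\Lambda(f,j+1)=\epsilon(f)\Lambda(f,k-1-j)$ to realign the period polynomial) completes the identification, and the coefficientwise agreement gives the desired equality of power series. I expect the main obstacle to be the combinatorial collapse of the inner double sum: correctly reindexing with $\ell=m+h$, justifying the finite interchange of summations, and then recognizing the binomial-theorem step followed immediately by the Stirling generating-function step. The Cauchy product and the functional-equation bookkeeping are routine by comparison.
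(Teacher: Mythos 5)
Your two computations are correct, and they are precisely the manipulations in the paper's own proof, just organized as a meet-in-the-middle argument: the paper starts from $R_f(z)/(1-z)^{k-1}$, expands by Newton's binomial theorem, writes $(k-2)!\binom{n+j}{k-2}=(n+j)_{k-2}=\sum_m s(k-2,m)(n+j)^m$, applies the binomial theorem to $(n+j)^m$, and interchanges and reindexes sums until the definition of $Z_f(-n)$ appears, whereas you collapse $Z_f(-n)$ down to the same binomial-moment sum. Your reindexing $\ell=m+h$, the binomial-theorem collapse, and the Stirling generating-function step are the paper's chain of equalities read in reverse, and for $\epsilon(f)=1$ your argument is complete.

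The genuine gap is your final step, ``matching the factor $\epsilon(f)$,'' and it cannot be repaired the way you suggest. Write $S_n:=\sum_{j=0}^{k-2}\binom{k-2}{j}\Lambda(f,j+1)\binom{n+j}{k-2}$. Your convolution gives the $z^n$-coefficient of $R_f(z)/(1-z)^{k-1}$ as $S_n$ (with no $\epsilon(f)$), while your collapse of the definition gives $Z_f(-n)=\epsilon(f)S_n$, so you need $S_n=\epsilon(f)S_n$. But applying $\Lambda(f,j+1)=\epsilon(f)\Lambda(f,k-1-j)$ and then reindexing $j\mapsto k-2-j$ reflects the weight along with everything else, yielding
\[
S_n=\epsilon(f)\sum_{j=0}^{k-2}\binom{k-2}{j}\Lambda(f,j+1)\binom{n+k-2-j}{k-2},
\]
which is \emph{not} $\epsilon(f)S_n$: the $j\mapsto k-2-j$ symmetry that holds at $z=1$ (where no weight $\binom{n+j}{k-2}$ is present) is destroyed by the weight. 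In fact $S_n=-S_n$ is false when $\epsilon(f)=-1$: take $k=4$, so $\Lambda(f,2)=0$ and $\Lambda(f,1)=-\Lambda(f,3)$, hence $R_f(z)=\Lambda(f,3)(1-z^2)$ and $R_f(z)/(1-z)^{3}=\Lambda(f,3)\sum_{n\ge0}(2n+1)z^n$, whereas the definition gives $Z_f(s)=\Lambda(f,3)(2s-1)$, i.e.\ $Z_f(-n)=-\Lambda(f,3)(2n+1)$. What your (correct) computations actually prove is $\sum_{n\ge0}Z_f(-n)z^n=\epsilon(f)\,R_f(z)/(1-z)^{k-1}$. You have in effect surfaced a normalization discrepancy in the source: the paper's proof inserts the factor $\epsilon(f)$ at the expansion step ``using the functional equation,'' a factor the direct Cauchy product does not produce. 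The honest way to finish is therefore not a ``realignment'' but either to carry the factor $\epsilon(f)$ into the statement, or to renormalize so that $R_f(z)=\sum_j\binom{k-2}{j}\Lambda(f,j+1)z^j$ (equivalently, drop the prefactor $\epsilon(f)$ from $Z_f$); as written, your last step asserts an identity that fails whenever $\epsilon(f)=-1$.
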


\begin{remark}
The generating function (\ref{BN}) for the values $\zeta(-n)$ has a well-known interpretation in $K$-theory \cite{FG}. It is essentially the generating function
    for the torsion of the $K$-groups for $\Q$. In view of this interpretation, it is natural to ask whether the $z$-series in Theorem \ref{Thm2} 
    has an analogous interpretation. In other words, what (if any) arithmetic information is encoded by the values $Z_f(-n)$?
Manin recently speculated \cite{M}  on the existence of results such as Theorems~\ref{Thm1} and \ref{Thm2}.
Indeed, in \cite{M} he produced similar zeta-polynomials by applying the Rodriguez-Villegas transform \cite{RV} to the odd period polynomials for Hecke eigenforms on $\SL_2(\Z)$ studied by
Conrey, Farmer, and Imamo$\mathrm{\bar{g}}$lu \cite{CFI}. He asked for a generalization for the full period polynomials
for such Hecke eigenforms in connection to recent work of El-Guindy and Raji \cite{ER}. Theorems \ref{Thm1} and \ref{Thm2} answer this question and
provide the generalization 
for all even weight $k\geq 4$ newforms on congruence subgroups of the form $\Gamma_0(N)$. Theorem~\ref{Thm1} additionally offers an explicit combinatorial description of the zeta-polynomials in terms of weighted moments.
\end{remark}

We offer a conjectural combinatorial arithmetic-geometric interpretation of the $Z_f(s)$.
To this end, we make use of the Bloch-Kato Conjecture, which offers a Galois cohomological interpretation for critical values of motivic $L$-functions
\cite{BK}. Here we consider the special case of the critical values $L(f,1), L(f,2),\dots, L(f,k-1)$.
These conjectures are concerned with motives $\mathcal{M}_f$ associated to $f$, but the data needed for this conjecture can be found in the {\it $\lambda$-adic realization} $V_\lambda$ of $\mathcal{M}_f$ for a prime $\lambda$ of $\Q(f)$, where $\Q(f)$ is the field generated by the Hecke eigenvalues $a_n(f)$ (where we have $a_1(f)=1$). The Galois representation $V_\lambda$ associated to $f$ is due to Deligne, and we recall the essential properties below. For a high-brow construction of $V_\lambda$ from $\mathcal{M}_f$, we refer to the seminal paper of Scholl \cite{scholl}.

Deligne's theorem \cite{D} says that for a prime $\lambda$ of $O_{\Q(f)}$ lying above $l$, there is a continuous linear representation $V_\lambda$ unramified outside $lN$
$$ \rho_{f,\lambda}: \Gal(\overline{\Q}/\Q)\rightarrow \GL(V_\lambda)$$
 so that for a prime $p \nmid lN$, the arithmetic Frobenius $\Frob_p$ satisfies 
 $$\Tr(\rho_f(\Frob_p^{-1}))=a_p(f), \text{ and } \det(\rho_f(\Frob_p^{-1}))=p^{k-1}.$$
 
We may also consider the $j$-th Tate twist $V_\lambda(j)$, which is $V_\lambda$ but with the action of Frobenius multiplied by $p^j$.  After choosing a $ \Gal(\overline{\Q}/\Q)$-stable lattice $T_\lambda$ in $V_\lambda$, we may consider the short exact sequence $$ 0 \longrightarrow T_\lambda(j) \longrightarrow V_\lambda(j)\stackrel{\pi}\longrightarrow V_\lambda/T_\lambda(j)\longrightarrow 0.$$
Bloch and Kato define local conditions $H^1_\mathbf{f}(\Q_p,V_\lambda(j))$ for each prime $p$, discussed in more detail in Section 3. We let $H^1_\mathbf{f}(\Q,V_\lambda(j))$ be the corresponding global object, i.e. the elements of $H^1(\Q,V_\lambda(j))$ whose restriction at $p$ lies in $H^1_\mathbf{f}(\Q_p,V_\lambda(j))$. Analogously, we may define $H^1_\mathbf{f}(\Q,V_\lambda/T_\lambda(j))$, which is the Bloch--Kato $\lambda$- Selmer group. The \v{S}afarevi\v{c}--Tate group is
$$\Sha_f(j)=\Directsum_{\lambda|l} \frac{H^1_\mathbf{f}(\Q,V_\lambda/T_\lambda(j))}{\pi_*H^1_\mathbf{f}(\Q,V_\lambda(j))}.$$
The Bloch--Kato Tamagawa number conjecture then asks the following:

\begin{conjecture}[Bloch--Kato]\label{bkt} Let $0\leq j\leq k-2$, and assume $L(f,j+1)\neq0$. Then we have
$$\frac{L(f,j+1)}{(2\pi i)^{j+1} \Omega^{(-1)^{j+1}}}=u_{j+1} \times \frac{\Tam(j+1) \# \Sha_f(j+1)}{\#H^0_\Q(j+1)\#H^0_\Q(k-1-j)}=:C(j+1) $$
\end{conjecture}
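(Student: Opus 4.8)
This statement is the Bloch--Kato Tamagawa number conjecture for the Tate twists of the modular motive $\mathcal{M}_f$, which is open in general; what follows is an outline of the strategy by which it is attacked and established under hypotheses (and is the reason we merely \emph{assume} it below). The plan is to reduce the asserted rational identity to a collection of $p$-primary local statements, translate each into Iwasawa theory, and then bound $\#\Sha_f(j+1)$ from above by an Euler system and from below by a main conjecture. Concretely, once the transcendental period $\Omega^{(-1)^{j+1}}$ is factored out, both sides lie in $\Q^\times$, so it suffices to verify that the $p$-adic valuations of the two sides agree for every prime $p$. The hypothesis $L(f,j+1)\neq 0$ guarantees (via the Euler-system bound below) that $H^1_{\mathbf f}(\Q,V_\lambda/T_\lambda(j+1))$ has corank $0$, so $\Sha_f(j+1)$ is finite and the right-hand side is well defined. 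I would then recast the $p$-part in the Fontaine--Perrin-Riou style \cite{BK}: the claim becomes that the image of the Bloch--Kato ``fundamental line'' determinant of $V_\lambda(j+1)$ under the $p$-adic comparison isomorphism is generated by $C(j+1)$.

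Next I would pass to the cyclotomic tower. Let $\Lambda=\Z_p[[\Gal(\Q_\infty/\Q)]]$ and deform $V_\lambda(j+1)$ to the associated big Galois module over $\Lambda$. By Perrin-Riou's theory, the $p$-part of the Tamagawa-number statement at the single twist $j+1$ is a specialization of the $p$-adic Iwasawa Main Conjecture for $f$: the characteristic ideal of the dual Selmer group should be generated by the $p$-adic $L$-function $L_p(f,s)$, whose value at $s=j+1$ interpolates $C(j+1)$ up to an explicit product of local Euler factors and the comparison between $\Omega^{(-1)^{j+1}}$ and the chosen $p$-adic period. A control/Poitou--Tate descent then converts the characteristic-ideal equality over $\Lambda$ into the finite-level order formula for $\Sha_f(j+1)$.

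The two inputs driving the Main Conjecture are, on one side, Kato's Beilinson--Kato Euler system: its classes, under Kato's explicit reciprocity law and the Perrin-Riou regulator, map to $L_p(f,s)$, yielding the divisibility $\mathrm{char}_\Lambda(\mathrm{Sel})\mid (L_p)$ and hence an upper bound on $\#\Sha_f(j+1)$. On the other side, the reverse divisibility is furnished by the Iwasawa Main Conjecture for $\GL_2$ of Skinner--Urban, proved via Eisenstein congruences on a unitary group under hypotheses (residual irreducibility of $\rho_{f,\lambda}$, $p$-distinguishedness, and conditions at $lN$). Combining the two divisibilities pins down the characteristic ideal exactly, and the descent step then delivers the precise order of $\Sha_f(j+1)$ together with the factor $\Tam(j+1)$ and the $H^0$ contributions in the denominator.

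I expect the main obstacle to be the descent bookkeeping rather than the Main Conjecture itself: matching $\Tam(j+1)$ and the local Euler factors emerging from the interpolation formula against the Bloch--Kato local conditions $H^1_{\mathbf f}(\Q_p,V_\lambda(j+1))$, and ruling out or correcting for trivial zeros of $L_p$ at $s=j+1$, is delicate. Moreover, the Skinner--Urban hypotheses, the behavior at the primes $p\mid lN$ and at $p=2$, and congruence primes each require separate treatment, so the identity as written is at present a theorem only under such restrictions. This is precisely why we state it as a conjecture and assume it throughout the arithmetic interpretation of the $Z_f(s)$ that follows.
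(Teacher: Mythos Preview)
The statement you were asked to prove is labeled \emph{Conjecture} in the paper, and the paper makes no attempt to prove it: it is simply assumed as a hypothesis in Theorem~\ref{Thm3}, whose proof consists of one sentence substituting the conjectural value $\widetilde{C(j+1)}$ for the $L$-value in the definition of $M_f(m)$. So there is no ``paper's own proof'' to compare against.

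Your proposal correctly recognizes this status and, rather than fabricating a proof, sketches the Euler-system/Iwasawa-theoretic machinery (Kato's zeta elements, Perrin--Riou's theory, the Skinner--Urban main conjecture) by which partial cases of the Tamagawa number conjecture for modular motives are established under auxiliary hypotheses. That is a reasonable and well-informed summary of the state of the art, but it goes well beyond anything the paper does or needs: the authors treat Conjecture~\ref{bkt} purely as a black box to give arithmetic meaning to the moments $M_f(m)$. In short, your instinct that no proof is expected here is exactly right; the extended strategic outline, while accurate, is supplementary rather than a comparison point.
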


Here, $\Omega^\pm$ denotes the Deligne period, $\Tam$ the product of the Tamagawa numbers, $H^0_\Q$ is the set of global points (precisely defined in Section $3$), and $u_{j+1}$ is a non-specified unit of $\Q(f)$.

\begin{remark}
Note that $L(f,j+1)\neq0$ in this range provided that $j+1\neq k/2$. 
\end{remark}
We denote the normalized version of $C(j+1)$ by 
\begin{equation}
\widetilde{C(j+1)}=C(j+1)\cdot \frac{(i\sqrt{N})^{j+1}\Omega^{(-1)^{j+1}}}{(k-2-j)!},
\end{equation}
but when $L(f,j+1)=0$, we define $\widetilde{C(j+1)}:=0$. 

\begin{theorem}\label{Thm3} Assuming the Bloch-Kato Conjecture and the notation  above, we have that
 $$
 M_f(m)=\sum_{0\leq j\leq k-2} \widetilde{C(j+1)}j^m,
 $$
 which in turn implies for each non-negative integer $n$ that
 $$
 Z_f(-n)= \epsilon(f)\sum_{j = 0}^{k-2}\left(\sum_{h=0}^{ k-2}\sum_{m = 0}^{k-2-h} n^h  \binom{m+h}{h}\cdot s(k-2,m+h)\right)j^m\widetilde{C(j+1)}.
 $$
\end{theorem}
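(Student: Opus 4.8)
The plan is to reduce the theorem to two elementary manipulations: first, an explicit simplification of $\widetilde{C(j+1)}$ that identifies it with the $j$-th weight appearing in the moment $M_f(m)$; and second, a direct substitution of the resulting moment identity into the definition of $Z_f(s)$, followed by an interchange of the order of summation. No analytic input beyond the definitions and the Bloch--Kato relation is required, so the content is entirely bookkeeping.

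First I would establish the moment identity $M_f(m)=\sum_{0\le j\le k-2}\widetilde{C(j+1)}\,j^m$. By Conjecture~\ref{bkt} the quantity $C(j+1)$ is, by its very definition, the normalized critical value $L(f,j+1)/\big((2\pi i)^{j+1}\Omega^{(-1)^{j+1}}\big)$. Substituting this into the definition of $\widetilde{C(j+1)}$, the Deligne period $\Omega^{(-1)^{j+1}}$ cancels and the powers of $i$ collapse, since $\frac{i\sqrt N}{2\pi i}=\frac{\sqrt N}{2\pi}$, which yields
\[
\widetilde{C(j+1)}=\left(\frac{\sqrt N}{2\pi}\right)^{j+1}\frac{L(f,j+1)}{(k-2-j)!}.
\]
This is precisely the weight attached to $j^m$ in the first expression for $M_f(m)$ in (\ref{Moments}), so summing against $j^m$ reproduces $M_f(m)$ term by term. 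The one point requiring care is the vanishing case: when $L(f,j+1)=0$ (which by the functional equation can occur only at the center $j+1=k/2$) the conjecture does not define $C(j+1)$, but we have set $\widetilde{C(j+1)}:=0$, and the corresponding summand of $M_f(m)$ also vanishes, so the identity persists across the whole range $0\le j\le k-2$.

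With the moment identity in hand, I would evaluate $Z_f(s)$ at $s=-n$. From (\ref{Zdef}) one has $(-s)^h=n^h$, so
\[
Z_f(-n)=\epsilon(f)\sum_{h=0}^{k-2} n^h \sum_{m=0}^{k-2-h} \binom{m+h}{h}\, s(k-2,m+h)\, M_f(m).
\]
Replacing $M_f(m)$ by $\sum_{j=0}^{k-2}\widetilde{C(j+1)}\,j^m$ and interchanging the finite sums so that the sum over $j$ becomes outermost produces exactly the claimed expression for $Z_f(-n)$.

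The computation is routine and there is no genuine obstacle; the only subtlety worth flagging is the status of the hypothesis. The displayed formulas follow formally from the definition of $C(j+1)$ as a normalized $L$-value together with the cancellation above, so ``assuming Bloch--Kato'' is used not to prove the equalities themselves but to endow each $\widetilde{C(j+1)}$ with its arithmetic-geometric meaning in terms of $\Tam(j+1)$, $\#\Sha_f(j+1)$, and the global point counts (up to the unit $u_{j+1}$). It is this interpretation that turns the identities for $M_f(m)$ and $Z_f(-n)$ into statements expressing the moments and the values $Z_f(-n)$ as weighted moments of \v{S}afarevi\v{c}--Tate data.
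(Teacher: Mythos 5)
Your proposal is correct and is essentially the paper's own argument: the paper's proof is the one-line statement that the theorem "follows immediately from replacing the terms involving $L(f,j+1)$ by $\widetilde{C(j+1)}$, and appropriate normalizations," which is precisely the cancellation $\widetilde{C(j+1)}=\left(\frac{\sqrt N}{2\pi}\right)^{j+1}\frac{L(f,j+1)}{(k-2-j)!}$ that you carry out, followed by substitution into (\ref{Zdef}) at $s=-n$ and an interchange of finite sums. Your additional care with the vanishing case $L(f,j+1)=0$ and your closing remark on what the Bloch--Kato hypothesis actually contributes (arithmetic meaning rather than the identity itself) are both consistent with, and slightly more explicit than, the paper.
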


Each $Z_f(s)$ can be thought of as an arithmetic-geometric Ehrhart polynomial, and the combinatorial structure in Theorem~\ref{Thm3}, which we call
the ``Bloch-Kato complex'', serves as an analogue of a polytope. 
Assuming the Bloch-Kato Conjecture, Theorem~\ref{Thm3} describes the values $Z_f(-n)$ as combinatorial sums of $m$-weighted moments of the $j$-th Bloch-Kato components. 
To describe this combinatorial structure, we made use of  the Stirling numbers $s(n,k)$ which can be arranged in a ``Pascal-type triangle'' 
$$
\begin{array}{ccccccccccccc}
 &  &   &        &      &       & 1 &  &   &  &  &  \\
& &   &       &      &  0     &  & 1 &   &  &  &   &  \\
&  &  &        & 0    &       & -1 & & 1 &  & &  & \\
& &  & 0      &     & 2       &    &  -3    &  & 1 &  &  &   \\
&  & 0 &       & -6  &      & 11  &         & -6 &  & 1 &  &  \\
  & 0 &   & 24    &    &-50     &    & 35      &   & -10 &   & 1 &  \\
0 &    & -120 &   & 274    &    & -225   &  & 85 &  & -15 &  & 1\\
\end{array}
$$
thanks to the recurrence relation
$$
s(n,k)=s(n-1,k-1)-(n-1)\cdot s(n-1,k).
$$
This follows from the obvious relation
$$
(x)_n=(x)_{n-1} (x-n+1)=x(x)_{n-1}-(n-1)(x)_{n-1}.
$$
The Bloch-Kato complex is then obtained by cobbling together weighted layers of these Pascal-type triangles using the binomial coefficients appearing in (\ref{Zdef}).

The connection to Ehrhart polynomials arises from the central role of  the $H_k^{\pm}(-s)$ in our study of the $Z_f(s)$.
In \cite{RV}, Rodriguez-Villegas proved that certain  Hilbert polynomials, such as the  $H^{\pm}_{k}(-s)$, which are Rodriguez-Villegas transforms of $x^{k-2}\pm1$, are examples of zeta-polynomials. 
These well-studied combinatorial polynomials  encode important geometric structure such as the distribution of integral points in polytopes.

Given a $d$-dimensional integral lattice polytope $\mathcal P$ in $\R^n$, we recall that the Ehrhart polynomial $\mathcal L_\mathcal P(x)$ is determined by 
\[
\mathcal L_\mathcal P(m)=\#\left\{p\in\Z^n : p\in m\mathcal P\right\}
.
\]
The polynomials $H_{k}^-(s)$ whose behavior determines an estimate for those of $Z_f(s)$ (when $\epsilon(f)=-1)$  as per Theorem \ref{ZeroDistribution} are the Ehrhart polynomials of the simplex (cf. \cite{BHW})
 \[\operatorname{conv}\left\{e_1,e_2,\ldots,e_{k-3},-\sum_{j=1}^{k-3}e_j\right\},
\]
where $e_i$ denotes the $i$-th unit vector in $\R^{k-3}$.
We note that in Section 1.10 of \cite{GRV}, Gunnells and Rodriguez-Villegas also gave an enticing interpretation of the modular-type behavior of Ehrhart polynomials. Namely, they noted that the polytopes $P$ with vertices in a lattice $L$, when acted upon by $\operatorname{GL}(L)$ in the usual way, have a fixed Ehrhart polynomial for each equivalence class of polytopes. Hence, these classes may be thought of as points on a ``modular curve'', and the $\ell$-th coefficient of the Ehrhart polynomial is analogous to a weight $\ell$ modular form. This analogy is strengthened as they define a natural Hecke operator on the set of Ehrhart polynomials, such that the $\ell$-th coefficients of them are eigenfunctions. Moreover, they show that these eigenclasses are all related to explicit, simple Galois representations. Thus, it is natural, and intriguing, to speculate on the relationship between these observations and our Theorem \ref{ZeroDistribution}. In particular, we have shown that as the level $N$ of cusp forms of a fixed weight $k$ tends to infinity, the coefficients of the zeta-polynomial $Z_f(s)$ tend to (a multiple of) these coefficients of Ehrhart polynomials considered in \cite{GRV}. It is also interesting to note that Zagier defined \cite{ZagHecke} a natural Hecke operator on the period polynomials of cusp forms, which commutes with the usual Hecke operators acting on cusp forms, and so one may ask if there is a reasonable interpretation of Hecke operators on the zeta functions $Z_f(s)$ which ties together this circle of ideas.

Now we describe the organization of this paper. In Section~\ref{Zeta} we prove Theorems~\ref{Thm1}, \ref{ZeroDistribution}, and \ref{Thm2}. We make use of recent work of
Jin, Ma, Soundararajan and the first author \cite{JinMaOnoSound} on zeros of period polynomials for modular forms,
the framework of Rodriguez-Villegas transforms \cite{RV}, and 
results of Bey, Henk, and Wills \cite{BHW} on the polynomials $H_k^{\pm}(s)$.
In Section~\ref{BlochKato} we briefly recall the Bloch-Kato Conjecture for the critical values of modular $L$-functions, and give the proof
of Theorem~\ref{Thm3}. We conclude in Section \ref{ExSec} with two examples. 

\section{Proof of Theorems ~\ref{Thm1}, \ref{ZeroDistribution},  and \ref{Thm2}}\label{Zeta}
Here we prove Theorems ~\ref{Thm1}, \ref{ZeroDistribution}, and \ref{Thm2}. We begin by recalling key results of Rodriguez-Villegas. 

\subsection{Theorem of Rodriguez-Villegas}
Here we recall important observations which were cleverly assembled in \cite{RV}. We provide a special case of these results which is most convenient for our purposes. First suppose that $U(z)$ is a polynomial of degree $e$ with $U(1)\neq0$. Then consider the rational function
\[
P(z):=\frac{U(z)}{(1-z)^{e+1}}
.
\]
Expanding as a power series in $z$, we have
\[
P(z)=\sum_{n=0}^{\infty}h_nz^n
,
\]
and it is easily shown that there is a polynomial $H(z)$ of degree $e$ such that for each $n$ we have $H(n)=h_n$. We then have the following ``zeta-like'' properties for the function $Z(s):=H(-s)$. 
\begin{theorem}[Rodriguez-Villegas]\label{RVThm}
If all roots of $U$ lie on the unit circle, then all roots of $Z(s)$ lie on the vertical line $\operatorname{Re}(z)=1/2$. Moreover, if $U$ has real coefficients and $U(1)\neq0$, then $Z(s)$ satisfies the functional equation
\[
Z(1-s)=(-1)^{e}Z(s)
.
\]
\end{theorem}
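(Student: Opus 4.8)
The plan is to prove the two assertions of Theorem~\ref{RVThm} separately, since they rely on quite different ideas: the functional equation is a purely formal computation about the reciprocal symmetry of the coefficients of $U$, while the location of the roots of $Z(s)$ is an analytic/combinatorial statement best attacked via a suitable integral or contour representation of $Z(s)$ in terms of the roots of $U$.

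\emph{Functional equation.} First I would record that $Z(s)=H(-s)$ where $H$ is the unique degree-$e$ polynomial interpolating the Taylor coefficients $h_n$ of $P(z)=U(z)/(1-z)^{e+1}$. The key formal input is that $1/(1-z)^{e+1}=\sum_{n\geq 0}\binom{n+e}{e}z^n$, so if $U(z)=\sum_{j=0}^{e}u_j z^j$ then
\[
h_n=\sum_{j=0}^{e}u_j\binom{n-j+e}{e},
\]
and one checks that the right-hand side, as a function of $n$, is already a polynomial of degree $e$ in $n$; this \emph{is} the interpolating polynomial, so $H(s)=\sum_j u_j\binom{s-j+e}{e}$ and hence $Z(s)=H(-s)=\sum_j u_j\binom{-s-j+e}{e}$. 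The functional equation $Z(1-s)=(-1)^e Z(s)$ should then fall out of two facts: the reflection identity $\binom{-s-j+e}{e}=(-1)^e\binom{s+j-1}{e}$, and the degree-$e$ reversal symmetry of the binomial sum. I expect that after substituting $s\mapsto 1-s$ and re-indexing $j\mapsto e-j$, the hypothesis that $U$ has \emph{real} coefficients is not quite what is used — rather one uses that $P(z)$ has a reciprocal-type symmetry; I would verify carefully which symmetry of $U$ is actually invoked, since as stated the claim seems to need $U$ palindromic (which indeed follows when all roots lie on the unit circle and come in conjugate-reciprocal pairs). I would reconcile this by noting the theorem is applied to $U(z)=x^{k-2}\pm 1$, which is palindromic.

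\emph{Riemann Hypothesis for $Z(s)$.} This is the substantive half. The cleanest route I know is the one originating with Rodriguez-Villegas: write $Z(s)$ as a contour integral or as an average over the roots of $U$. Concretely, factoring $U(z)=u_e\prod_{\ell=1}^{e}(z-\alpha_\ell)$ with $|\alpha_\ell|=1$, one can express $Z(s)=H(-s)$ through a product/integral formula in which the substitution $z=e^{2\pi i\theta}$ sends the unit circle to the real line and converts the condition $|\alpha_\ell|=1$ into a \emph{real} phase. The strategy is to show that $Z(s)$ is, up to a nonvanishing factor, a real linear combination of the functions $s\mapsto \Gamma(s+\beta_\ell)/\Gamma(s+\bar\beta_\ell)$ or, more elementarily, to exhibit $Z\!\left(\tfrac12+it\right)$ as $|g(t)|\cdot(\text{real})$ for an explicit $g$, forcing all zeros onto $\re(s)=\tfrac12$. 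Equivalently, one invokes a result of the form: if a polynomial's coefficient generating data is a nonnegative-weight average of points on the unit circle, then the associated $Z$ obeys RH — this is precisely the content Rodriguez-Villegas isolates.

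\emph{Main obstacle.} The hard part will be the root-location argument, not the functional equation. The difficulty is to convert the geometric hypothesis ``roots of $U$ on the unit circle'' into the spectral statement ``roots of $Z$ on $\re(s)=\tfrac12$'' in a way that is rigorous rather than heuristic; the pairing $z\leftrightarrow \tfrac{1}{1-z}$ underlying the $(1-z)^{-(e+1)}$ expansion must be shown to send the unit circle to the critical line, and one must control the interpolation step (uniqueness and degree of $H$) so that no spurious off-line zeros are introduced. Since the paper cites \cite{RV} for this theorem, I would ultimately attribute the RH half to that reference and focus my own write-up on making the functional-equation computation explicit and on verifying the circle-to-line correspondence in the special case at hand.
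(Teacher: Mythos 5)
Your overall architecture matches the paper's: both halves defer the Riemann Hypothesis statement to the theorem of \cite{RV} (the paper literally says the first claim is the case $d=e+1$ of that theorem), and both prove the functional equation by exploiting the self-inversive symmetry of $U$ forced by real coefficients and unit-circle roots. Where you differ is in the mechanics of the functional-equation half. The paper never touches the explicit binomial formula for $H$: it reduces the functional equation, via the ``single proposition'' of \cite{RV}, to the identity $P(1/z)=(-1)^{e+1}zP(z)$, which it verifies by factoring $U(z)=(z-\rho_1)\cdots(z-\rho_e)$ and showing $U(1/z)=z^{-e}U(z)$ from $(-1)^e\rho_1\cdots\rho_e=1$. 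You instead work directly with $H(s)=\sum_j u_j\binom{s-j+e}{e}$, the reflection identity $\binom{-s-j+e}{e}=(-1)^e\binom{s+j-1}{e}$, and the coefficient reversal $j\mapsto e-j$; this computation does close correctly (given $u_j=u_{e-j}$ it yields $Z(1-s)=(-1)^eZ(s)$) and has the advantage of being self-contained, not requiring the auxiliary proposition of \cite{RV}. Two caveats you should fix in a final write-up. First, palindromicity does \emph{not} follow from real coefficients and unit-circle roots alone: $U(z)=z-1$ has both properties but is anti-palindromic, and more generally a root at $z=1$ of odd multiplicity flips the sign. The hypothesis $U(1)\neq 0$ is exactly what rules this out (real roots on the circle other than $1$ are $-1$'s, and a short parity count then gives $u_j=u_{e-j}$); this is precisely where the paper's step $(-1)^e\rho_1\cdots\rho_e=1$ uses that no $\rho_j$ equals $1$. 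Second, your proposed fallback of proving palindromicity only for $U(z)=x^{k-2}\pm 1$ would not suffice: in this paper the theorem is applied to $U=\widehat R_f(z)$ (the period polynomial with any simple zero at $z=1$ divided out), whose coefficients are real but are honest $L$-values, not $\pm 1$; the polynomials $x^{k-2}\pm1$ only enter later as the sources of the comparison polynomials $H_k^{\pm}$. So you must keep the general derivation of palindromicity from the stated hypotheses, which fortunately your parenthetical argument essentially contains.
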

\begin{proof}
The first claim is simply the special case of the Theorem of \cite{RV} when $d=e+1$. The second claim was described in Section 4 of \cite{RV}, but for the reader's convenience we sketch the proof. By the single proposition of \cite{RV}, it suffices to show that $P(1/z)=(-1)^{e+1}zP(z)$. Now suppose that $U$ factors as
\[
U(z)=(z-\rho_1)\ldots(z-\rho_e)
,
\]
where each $\rho_j$ is on the unit circle but not equal to $1$. Then 
\[
z^eU\left(\frac 1z\right)=(1-z\rho_1)\ldots(1-z\rho_e).
\]
Since the coefficients of $U$ are real, we have $(-1)^e\rho_1\rho_2\ldots\rho_e=1$, and dividing by this quantity yields 
\[
U\left(\frac 1z\right)=z^{-e}U(z)
.
\]
The claimed transformation for $P$ then follows from this transformation for $U$ by plugging into the definition of $P$:
\[
P\left(\frac 1z\right)=\frac{U\left(\frac 1z\right)}{\left(1-\frac1z\right)^{e+1}}=\frac{U(z)\cdot z}{\left(1-\frac 1z\right)z^{e+1}}
=
(-1)^{e+1}z\frac{U(z)}{(1-z)^{e+1}}=(-1)^{e+1}zP(z).
\]
\end{proof}

\subsection{Zeros of Period Polynomials}

Extending the work of Conrey, Farmer, and Imamo$\mathrm{\bar{g}}$lu \cite{CFI} and El-Guindy and Raji \cite{ER}, it is now known that period polynomials
of newforms satisfy the Riemann Hypothesis. More precisely, we have the following theorem.

\begin{theorem}[\cite{JinMaOnoSound}, Theorem 1.1]\label{RHPP}
If $f\in S_k(\Gamma_0(N))$ is an even weight $k\geq 4$ newform, 
then all zeros of the period polynomial $R_f(z)$ lie on the unit circle.
\end{theorem}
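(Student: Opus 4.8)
The plan is to realize $R_f(z)$ as an integral transform of $f$ and exploit the Fricke involution to produce a manifestly self-inversive integral, then reduce the location of the zeros to a sign-change count for a real-valued function on the unit circle. First I would record the integral representation. Writing $\Lambda(f,s) = N^{s/2}\int_0^\infty f(it)t^{s-1}\,dt$ and substituting into the definition of $R_f$, the binomial theorem collapses the sum over $j$ and, after the rescaling $t = w/\sqrt{N}$, yields
\[
R_f(z) = \int_0^\infty g(w)(w+z)^{k-2}\,dw, \qquad g(w) := f\!\left(\tfrac{iw}{\sqrt{N}}\right).
\]
The Fricke involution $W_N$ acts in these coordinates by $w\mapsto 1/w$, and for a newform one has $g(1/w) = \epsilon(f)\,w^k g(w)$, with the same sign $\epsilon(f)$ as in the functional equation. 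Folding the piece $\int_0^1$ onto $\int_1^\infty$ via $w\mapsto 1/w$ then gives the symmetric representation
\[
R_f(z) = \int_1^\infty g(w)\left[(w+z)^{k-2} + \epsilon(f)(1+zw)^{k-2}\right]dw,
\]
from which the self-inversive relation $z^{k-2}R_f(1/z) = \epsilon(f)R_f(z)$ is transparent.

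Second, I would restrict to $|z|=1$ and normalize. Since $f$ has trivial nebentypus its Fourier coefficients $a_f(n)$ are real, so $g$ is real on the positive axis. For $z = e^{i\theta}$ one checks that $(1+zw)^{k-2} = z^{k-2}\overline{(w+z)^{k-2}}$, whence $z^{-(k-2)/2}$ times the bracket equals $A + \epsilon(f)\overline{A}$ with $A := z^{-(k-2)/2}(w+z)^{k-2}$. Thus $F(\theta) := e^{-i(k-2)\theta/2}R_f(e^{i\theta})$ is real-valued when $\epsilon(f)=1$ and purely imaginary when $\epsilon(f)=-1$; in either case the zeros of $R_f$ on the unit circle are exactly the zeros of the real function $F$ (or $F/i$). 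Because $\deg R_f = k-2$, it suffices to exhibit $k-2$ sign changes of this real function as $\theta$ runs over a period, for then all $k-2$ roots are accounted for on the circle.

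Third, I would isolate the $n=1$ Fourier term of $g$ as the main term,
\[
R_f^{\mathrm{main}}(z) := \int_1^\infty e^{-2\pi w/\sqrt{N}}\left[(w+z)^{k-2}+\epsilon(f)(1+zw)^{k-2}\right]dw,
\]
which retains the self-inversive symmetry and hence, after the same normalization, produces a real function $F^{\mathrm{main}}(\theta)$ whose sign changes can be computed explicitly: writing $w + e^{i\theta} = e^{i\theta/2}(we^{-i\theta/2}+e^{i\theta/2})$ the integrand reduces to $e^{-2\pi w/\sqrt{N}}\RE$ (or $\IM$) of $(we^{-i\theta/2}+e^{i\theta/2})^{k-2}$, and one shows this oscillates with exactly $k-2$ sign changes on $[0,2\pi)$. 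The remaining terms $\sum_{n\geq 2}a_f(n)e^{-2\pi n w/\sqrt{N}}$ contribute an error which I would bound using Deligne's estimate $|a_f(n)|\le \sigma_0(n)\,n^{(k-1)/2}$; because each higher term carries an extra decay factor $e^{-2\pi(n-1)w/\sqrt{N}}$ on the range $w\geq 1$, the total error should be dominated by the main term.

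The hard part will be making the error bound beat the main term uniformly, precisely where the main term is smallest, namely near its own zeros. Concretely, one must show that the spacing and transversality of the sign changes of $F^{\mathrm{main}}$ (that is, a lower bound on $|F^{\mathrm{main}}|$ away from, and on its derivative at, those zeros) outweigh the perturbation from the $n\geq 2$ terms, so that $F$ inherits all $k-2$ sign changes; this is the step that requires the full strength of the Ramanujan--Petersson bound and must be carried out uniformly in the level $N$, the slow decay $e^{-2\pi w/\sqrt{N}}$ for large $N$ being the delicate regime. Establishing the exact count $k-2$ for $F^{\mathrm{main}}$ itself, through a careful analysis of the incomplete-Gamma/trigonometric expression above, is the other technical core. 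Once both are in hand, the intermediate value theorem delivers $k-2$ zeros of $R_f$ on $|z|=1$, which is all of them.
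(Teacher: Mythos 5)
The paper you are trying to reprove this for does not actually prove it: Theorem~\ref{RHPP} is imported wholesale from \cite{JinMaOnoSound} (Theorem 1.1 there), with only the remark that the variable must be rescaled to move the zeros from the circle $|z|=1/\sqrt{N}$ to the unit circle. So your attempt has to be judged against the argument of \cite{JinMaOnoSound}. Your setup is correct and is indeed the common skeleton of such proofs: the representation $R_f(z)=\int_0^\infty f(iw/\sqrt{N})(w+z)^{k-2}\,dw$ follows from $\Lambda(f,s)=N^{s/2}\int_0^\infty f(it)t^{s-1}\,dt$; the Fricke relation $g(1/w)=\epsilon(f)w^k g(w)$ (with $\epsilon(f)=i^k\eta$, $\eta$ the Fricke eigenvalue, so your sign is right), the folding, the self-inversive identity $z^{k-2}R_f(1/z)=\epsilon(f)R_f(z)$, and the reduction to counting $k-2$ sign changes of the real (or purely imaginary) function $e^{-i(k-2)\theta/2}R_f(e^{i\theta})$ are all sound, since $\deg R_f=k-2$ because $\Lambda(f,1)=\epsilon(f)\Lambda(f,k-1)\neq0$.

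The genuine gap is exactly where you place it, but the tool you propose for closing it provably cannot work. In the delicate regime $N\to\infty$, the $n$-th Fourier term contributes roughly $a_n\,\Gamma(k-1)\bigl(\sqrt{N}/(2\pi n)\bigr)^{k-1}$ to the integral, while the normalized main term is essentially $2\cos\bigl(\tfrac{k-2}{2}\theta\bigr)$ times $\Gamma(k-1)(\sqrt{N}/2\pi)^{k-1}$. Bounding the tail termwise by Deligne, $|a_n|\le\sigma_0(n)n^{(k-1)/2}$, gives a relative error of size $\sum_{n\ge2}\sigma_0(n)n^{-(k-1)/2}=\zeta\bigl(\tfrac{k-1}{2}\bigr)^2-1$, which is $\approx 5.8$ for $k=4$ and $\approx 0.8$ for $k=6$. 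Since one needs this to be $<1$ even to certify the sign of $F$ at the extrema of the cosine, the method fails outright for $k=4$ (which the theorem covers, for every level) and is at best marginal for $k=6$; no refinement of the transversality analysis of $F^{\mathrm{main}}$ can rescue an error bound that exceeds the main term. The missing idea — and the actual mechanism in \cite{JinMaOnoSound} — is never to split off the $n=1$ term at all: keep the coefficients packaged as completed $L$-values $\Lambda(f,j+1)$, use the monotonicity chain \eqref{ChainIneqCritVal} (their Lemma 2.1, quoted in this paper) to make the two extreme coefficients $\Lambda(f,k-1)$ dominant, and bound $L(f,k-1)$ from below via the Euler product (legitimate since $s=k-1$ lies in the region of absolute convergence for $k\ge4$, giving $L(f,k-1)\ge\prod_p(1+p^{-(k-1)/2})^{-2}>0$); the sign-change count is then run against this dominant pair, with separate treatment of large $k$, large $N$, and the remaining small cases. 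In other words, the cancellation you are discarding by taking absolute values termwise is precisely the arithmetic content ($\sum_n a_n n^{-(k-1)}=L(f,k-1)$, not $\approx a_1$) that the proof must retain. A minor further point: when $\epsilon(f)=-1$ your real function vanishes automatically at $\theta=0,\pi$ (the forced zeros $z=\pm1$ of $R_f$), so the sign-change bookkeeping needs multiplicity care there.
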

\begin{remark}
The original result in \cite{JinMaOnoSound} states an equivalent result for a slightly differently normalized polynomial (which involves a rescaling of the variable $z$, and hence a stretching of the circle that the zeros lie on).
\end{remark}
As we shall see, this theorem will provide the link between Theorem~\ref{Thm1} and Theorem~\ref{RVThm}. We also require the following basic result.
\begin{lemma}\label{ValueOneNotZeroPP}
Under the same conditions as Theorem \ref{RHPP}, we have that $R_f(1)\neq0$ if $\epsilon(f)=1$ and $R_f(s)$ has a simple zero at $s=1$ if $\epsilon(f)=-1$.
\end{lemma}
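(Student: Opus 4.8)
The plan is to read off the behaviour of $R_f$ at the point $z=1$ from the functional equation of $\Lambda(f,s)$, and then to pin down the \emph{order} of vanishing there using Theorem~\ref{RHPP}. First I would record the self-inversive symmetry of the period polynomial: reindexing the defining sum by $j\mapsto k-2-j$ and substituting $\Lambda(f,j+1)=\epsilon(f)\Lambda(f,k-1-j)$ yields the polynomial identity
\[
z^{k-2}R_f(1/z)=\epsilon(f)\,R_f(z).
\]
Evaluating at $z=1$ gives $R_f(1)=\epsilon(f)R_f(1)$, so $R_f(1)=0$ whenever $\epsilon(f)=-1$; this settles the vanishing assertion (and the same computation at $z=-1$, using that $k-2$ is even, shows $R_f(-1)=0$ as well). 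I would also note that $\deg R_f=k-2$: the constant term $\Lambda(f,k-1)$ is nonzero because $k-1>(k+1)/2$ for $k\geq4$ places $s=k-1$ in the region of absolute convergence of the Euler product, and the leading term $\Lambda(f,1)=\epsilon(f)\Lambda(f,k-1)$ is then nonzero too.

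Next I would reduce the remaining two claims -- nonvanishing when $\epsilon(f)=1$ and simplicity when $\epsilon(f)=-1$ -- to a single statement about simplicity of \emph{all} zeros. Writing $z=e^{i\theta}$ and setting $\Phi(\theta):=e^{-i(k-2)\theta/2}R_f(e^{i\theta})$, the displayed symmetry becomes $\Phi(-\theta)=\epsilon(f)\Phi(\theta)$, so $\Phi$ is an even (resp. odd) function of $\theta$ when $\epsilon(f)=1$ (resp. $-1$). Since the nonvanishing prefactor is harmless, the zeros of $\Phi$ are exactly the arguments of the zeros of $R_f$, with matching orders, and by Theorem~\ref{RHPP} these account for all $k-2$ zeros. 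Granting that every zero of $R_f$ is simple, both claims then fall out of the parity of $\Phi$: if $\epsilon(f)=-1$ then $\Phi$ is odd, so $\Phi(0)=0$ and this zero is simple, giving a simple zero of $R_f$ at $1$; if $\epsilon(f)=1$ then $\Phi$ is even, so $\Phi'$ is odd and $\Phi'(0)=0$, whence $\Phi(0)=0$ would force a zero of order at least $2$ at $\theta=0$, contradicting simplicity, so $R_f(1)\neq0$.

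The \textbf{main obstacle} is therefore establishing that all zeros of $R_f$ on the unit circle are simple, which is precisely the content not supplied by the functional equation: the relation above is symmetric under $z\mapsto 1/z$ and is consistent with a zero of arbitrary multiplicity at the fixed point $z=1$ (indeed differentiating it at $z=1$ returns only the tautology $R_f'(1)=R_f'(1)$ when $\epsilon(f)=-1$). To get simplicity I would use the quantitative localization of zeros underlying Theorem~\ref{RHPP}. When $f$ has real Fourier coefficients $\Phi$ is, up to the factor $i^{(1-\epsilon(f))/2}$, a real trigonometric polynomial, and the sign-change count of \cite{JinMaOnoSound} (extending \cite{CFI}) exhibits $k-2$ sign changes per period, i.e. $k-2$ distinct zeros, which by $\deg R_f=k-2$ are all simple. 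For newforms with non-real coefficients I would inherit simplicity directly from the proof of Theorem~\ref{RHPP}, whose arc-by-arc localization already produces $k-2$ simple zeros on the circle.
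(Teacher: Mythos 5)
Your reduction is clean as far as it goes: the self-inversive relation $z^{k-2}R_f(1/z)=\epsilon(f)R_f(z)$, the consequent vanishing $R_f(1)=0$ when $\epsilon(f)=-1$, and the parity argument for $\Phi(\theta)=e^{-i(k-2)\theta/2}R_f(e^{i\theta})$ are all correct. But the proof does not close, because both remaining claims (non-vanishing when $\epsilon(f)=1$, simplicity when $\epsilon(f)=-1$) are made to rest on the simplicity of the zeros of $R_f$, and that ingredient is never established. Theorem \ref{RHPP} as stated says only that the zeros lie on the unit circle; simplicity is not part of its statement, so you cannot cite it. Your fallback is to invoke the sign-change count ``underlying'' the proof in \cite{JinMaOnoSound}, but that argument is asymptotic: it yields the requisite number of sign changes only for $k$ (or $N$) sufficiently large, and the finitely many remaining cases in that paper are handled by direct verification, so there is no citable statement giving $k-2$ distinct zeros for \emph{all} $k\geq 4$ and $N$. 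Worse, the multiplicity statement you need at the fixed point $z=1$ is exactly the content of the lemma in the $\epsilon(f)=-1$ case, so deferring it to unpublished details of another proof is circular in spirit and incomplete in substance. (Your worry about non-real coefficients is moot --- newforms on $\Gamma_0(N)$ have real Hecke eigenvalues, so $R_f$ has real coefficients --- but that does not repair the gap.)

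The paper's proof needs neither Theorem \ref{RHPP} nor any global simplicity statement. It folds the sums for $R_f(1)$ and $R_f'(1)$ using the functional equation and then invokes positivity and monotonicity of the completed critical values: Waldspurger gives $\Lambda\left(f,\frac k2\right)\geq 0$, and Lemma 2.1 of \cite{JinMaOnoSound} gives
$0\leq\Lambda\left(f,\frac k2\right)\leq \Lambda\left(f,\frac k2+1\right)\leq\cdots\leq\Lambda(f,k-1)$, with $\Lambda\left(f,\frac k2\right)=0$ when $\epsilon(f)=-1$. Hence for $\epsilon(f)=1$ the folded expression for $R_f(1)$ is a sum of non-negative terms that cannot all vanish, so $R_f(1)\neq 0$; and for $\epsilon(f)=-1$ one gets $R_f(1)=-\binom{k-2}{(k-2)/2}\Lambda\left(f,\frac k2\right)=0$ while $R_f'(1)$ is a sum of non-positive terms including $(2-k)\Lambda(f,k-1)<0$, so the zero at $z=1$ is simple. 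This monotone chain of completed $L$-values is precisely the input your proposal is missing; with it, the derivative computation at $z=1$ is immediate and the detour through simplicity of all zeros becomes unnecessary.
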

\begin{proof}
The functional equation for $\Lambda(f,s)$ shows that
\begin{equation}\label{RF1Eqn}
R_f(1)
=
\epsilon(f)\sum_{j=0}^{k-2} \binom{k-2}{j}\Lambda(f,j+1)
=\begin{cases}
\binom{k-2}{\frac{k-2}2} \Lambda\left(f,\frac k2\right)+2\sum_{j=\frac k2}^{k-2}\binom{k-2}{j}\Lambda(f,j+1)& \text{ if }\epsilon(f)=1,
 \\
-\binom{k-2}{\frac{k-2}2}\Lambda\left(f,\frac k2\right) & \text{ if } \epsilon(f)=-1.
\end{cases}
\end{equation}
Now $\Lambda(f,s)$ is real-valued on the real line, and
well-known work of Waldspurger \cite{W} implies that $\Lambda\left(f,\frac k2\right)\geq0$. Moreover, Lemma 2.1 of \cite{JinMaOnoSound} states that
\begin{equation}\label{ChainIneqCritVal}
0\leq\Lambda\left(f,\frac k2\right)\leq \Lambda\left(f,\frac k2+1\right)\leq\ldots\Lambda(f,k-1)
\end{equation}
and that $\Lambda\left(f,\frac k2\right)=0$ if $\epsilon(f)=-1$. So, if $\epsilon(f)=1$, then the expression in the first case of \eqref{RF1Eqn} is composed of all non-negative terms, which cannot all vanish as it is impossible for all periods of $f$ to be zero. Hence, in this case, $R_f(1)\neq0$. If $\epsilon(f)=-1$, then as $\Lambda\left(f,\frac k2\right)=0$, we see that $R_f(1)=0$. To see that this zero is simple, note in a similar manner that all terms in $R_f'(1)$ are non-positive, with the last term being $(2-k)\Lambda(f,k-1)$. But this term cannot be zero, as the chain of inequalities in \eqref{ChainIneqCritVal} would then imply that all periods of $f$ are zero.
\end{proof}
\subsection{Proof of Theorem~\ref{Thm2}}

Using Newton's Binomial Theorem, we have
\[
(1-z)^{1-k}=\sum_{n\geq0}\binom{k-2+n}{k-2}z^n
,
\]
and so, letting $j\mapsto k-2-j$ in the sum defining $R_f(z)$, using the functional equation for $\Lambda_f$, and sending $n\mapsto n+j-(k-2)$ gives
\[
\frac{R_f(z)}{(1-z)^{k-1}}
=
\epsilon(f)\sum_{n=0}^{\infty}z^n\sum_{j=0}^{k-2}\binom{k-2}{j}\Lambda(f,j+1)\binom{n+j}{k-2}
.
\]
Calling the coefficient of $z^n$ in this last expression $h_n$, we find that
\begin{equation*}
\begin{aligned}
h_n
&
=
\epsilon(f)\frac1{(k-2)!}\sum_{j=0}^{k-2}\binom{k-2}j\Lambda(f,j+1)\sum_{m=0}^{k-2}s(k-2,m)(n+j)^m
\\
&
=
\epsilon(f)\frac1{(k-2)!}\sum_{j=0}^{k-2}\binom{k-2}j\Lambda(f,j+1)\sum_{m=0}^{k-2}s(k-2,m)\sum_{h=0}^m\binom mh j^{m-h}n^h
\\
&
=
\epsilon(f)\frac{1}{(k-2)!}\sum_{h=0}^{k-2}n^h\sum_{m=h}^{k-2}\binom mh s(k-2,m)\sum_{j=0}^{k-2}\binom{k-2}j\Lambda(f,j+1)j^{m-h}
\\
&
=
\epsilon(f)\frac{1}{(k-2)!}\sum_{h=0}^{k-2}n^h\sum_{m=0}^{k-2-h}\binom{m+h}hs(k-2,m+h)\sum_{j=0}^{k-2}\binom{k-2}j\Lambda(f,j+1)j^m
,
\end{aligned}
\end{equation*}
which is $Z_f(-n)$ by definition.

\subsection{Proof of Theorem~\ref{Thm1}}

We begin by setting
\[
\widehat R_f(z):=\frac{R_f(z)}{(1-z)^{\delta_{-1,\epsilon(f)}}}
,
\]
where $\delta_{i,j}$ is the Kronecker delta function. 
By Theorem \ref{RHPP} and Lemma \ref{ValueOneNotZeroPP}, we see that $\widehat R_f$ is a polynomial of degree $k-2-\delta_{-1,\epsilon(f)}$ all of whose roots lie on the unit circle and such that $\widehat R_f(1)\neq0$.
Thus, we have
\[
\frac{R_f(z)}{(1-z)^{k-1}}=\frac{\widehat R_f(z)}{(1-z)^{k-1-\delta_{-1,\epsilon(f)}}}
.
\]
Applying Theorem \ref{Thm2} and Theorem \ref{RVThm} with $e=k-2-\delta_{-1,\epsilon(f)}$ yields the result, and in particular shows that the zeros of $Z_f(s)$ lie on the line $\re(s)=1/2$.

\subsection{Proof of Theorem~\ref{ZeroDistribution}}\label{ProofZeroDistThm}

\begin{proof}[Proof of Theorem~\ref{ZeroDistribution}]
To prove (1) we note that the polynomials $H^-_{k}(x)$ are Rodriguez-Villegas transforms of $\sum_{j=0}^{k-3} x^j$ and that the $H^+_k(x)$ are the transforms of $x^{k-2}+1$. For example, we have that
$$
\frac{\sum_{j=0}^{k-3} x^j}{(1-x)^{k-2}}=\sum_{n\geq0}H^-_{k}(n)x^n
$$
That  the zeros of $H^{-}_{k}(x)$ are on $\re(x)=1/2$ is one of the main examples of Theorem \ref{RVThm} in \cite{BHW}. The precise location of its zeros in Theorem \ref{ZeroDistribution} is a recapitulation of the statement and proof of Theorem 1.7 of \cite{BHW}. A simple modification of the proof there yields the locations of the zeros for $H^+_k(x)$. Essentially, they are the same as the polynomials considered in (2.7) there, but with the degree $n$ shifted by $1$ and with the minus sign in the last expression replaced by a plus, and their proof is then easily adapted by keeping track of an extra sign throughout. 
 
The proof of part (2) follows directly from part (i) of Theorem 1.2 in \cite{JinMaOnoSound}. Noting that our polynomials $R_f(z)$ are related to
$r_f(z)$ in the notation of \cite{JinMaOnoSound} by
$$
R_f(z)= (\sqrt{N}/i)^{k-1} \cdot r_f\left(\frac{z}{i\sqrt{N}}\right).
$$
Therefore, when $k=4$ and $\epsilon(f)=-1$ we see that $R_f$ is a multiple of $z^2-1$. Hence, its corresponding $Z_f(s)$ is a multiple of $2s-1$. Similarly, when $\epsilon(f)=1$, again by Theorem 1.2 of \cite{JinMaOnoSound} we see that the roots of $R_f(z)$ lie arbitrarily close to $\pm i$ as $N\rightarrow\infty$. Hence, the Rodriguez-Villegas transform becomes arbitrarily close in the limit to the transform of (a multiple of) $z^2+1$, and so the coefficients of $Z_f(s)$ tend to those of (a multiple of) the polynomial $s^2-s+1$. As extracting roots of a polynomial is continuous in the coefficients of the polynomial, we have the desired convergence of the roots of $Z_f(s)$ in the limit.

We will prove part (3) similarly using 
Theorem 1.2 (ii)  \cite{JinMaOnoSound} to determine the zeros of $R_f(z)$ to high accuracy. That is, we can rephrase Theorem 1.2 (ii) of \cite{JinMaOnoSound} as saying that for large $N$, the roots of $R_f(s)$ may be written as 
$$
 \exp\left( i \theta_\ell + O\left(\frac{1}{2^k\sqrt{N}}\right) \right),
$$ 
where for $0\le \ell \le k-3$ we denote by $\theta_\ell$ the unique solution in $[0,2\pi)$ to the equation  
$$ 
\left(\frac{k-2}2\right)\theta_\ell - \frac{2\pi}{\sqrt{N}} \sin \theta_\ell = \begin{cases} 
\frac \pi2 +\ell \pi &\text{if  } \epsilon(f)=1,\\ 
\ell \pi &\text{if  } \epsilon(f)=-1. \\
\end{cases}
$$
Now as $N$ grows, the angles $\theta_\ell$ are very nearly the solutions in $[0,2\pi)$ of the equation 
$$ 
\left(\frac{k-2}2\right)\theta'_\ell = \begin{cases} 
\frac \pi2 +\ell \pi &\text{if  } \epsilon(f)=1\\ 
\ell \pi &\text{if  } \epsilon(f)=-1, \\
\end{cases}
$$
which are exactly the roots of $H^{\pm}_k(z)$ where $\epsilon(f)=\pm1$.
Due to the presence of the $\sqrt{N}$ in the denominator of the error term in the estimation of the roots of a period polynomial $R_f(z)$ above, we conclude that the coefficients of $R_f(z)$ as $N\rightarrow\infty$ are approaching those of a multiple of $z^{k-2}\pm 1$. The result then follows directly from part (1) and the fact that taking roots of polynomials is a continuous operation depending on their coefficients. We note that the matching of distributions of zeros of the two polynomials is made possible by the fact (cf. Lemma \ref{ValueOneNotZeroPP}) that both $z^{k-2}\pm1$ and $R_f(z)$ have the same order of vanishing at $z=1$ and hence that their Rodriguez-Villegas transforms have the same degrees.

To prove the strict upper bound on the imaginary parts of roots, we first consider the case when $\epsilon(f)=1$. Then we directly apply Theorem 1 of \cite{B}, using the positivity properties of critical completed $L$-values reviewed in Lemma \ref{ValueOneNotZeroPP}. When $\epsilon(f)=-1$, by Lemma \ref{ValueOneNotZeroPP} and the functional equation for $\Lambda(f,s)$, we see that $R_f(x)/(1-x)$ is a polynomial with all non-positive coefficients, and so the Rodriguez-Villegas transform of $R_f$ is the same as the Rodriguez-Villegas transform of this polynomial with the degree lowered by $1$. This shows that we may again apply Theorem 1 of \cite{B}, by applying it to the polynomial $R_f(s)/(x-1)$ with positive coefficients and whose transform has the same zeros as $Z_f(s)$.

\end{proof}

\section{Ingredients for Theorem~\ref{Thm3}}\label{BlochKato}

We first describe the local conditions $H^1_\mathbf{f}(\Q_p,V_\lambda(j))$ for a given prime $p$, following \cite[Section 3]{BK}. Recall that $\lambda$ was the prime above $l$ in Deligne's representation $V_\lambda$. 

The first case is when $p=l$. Here, we define $$H^1_\mathbf{f}(\Q_p,V_\lambda(j)):=\ker\left(H^1_\mathbf{f}(D_p,V_\lambda(j))\rightarrow H^1_\mathbf{f}(D_p,V_\lambda(j)\otimes\mathbb{B}_{\text{cris}})\right),$$
where $D_p$ denotes a decomposition group for a prime over $p$. For a definition of the $\Q_p$-algebra $\mathbb{B}_{\text{cris}}$, we refer to Berger's article \cite[II.3]{Berger}.

For the other cases (i.e. $p\neq l$), we let $$H^1_\mathbf{f}(\Q_p,V_\lambda(j)):=\ker\left(H^1_\mathbf{f}(D_p,V_\lambda(j))\rightarrow H^1_\mathbf{f}(I_p,V_\lambda(j))\right),$$
where $I_p$ the inertia subgroup. We let $H^1_\mathbf{f}(\Q,V_\lambda(j))$ be the corresponding global object, i.e. the elements of $H^1(\Q,V_\lambda(j))$ whose restriction at $p$ lies in $H^1_\mathbf{f}(\Q_p,V_\lambda(j))$. 

We note that Bloch and Kato's Tamagawa number conjecture \ref{bkt} is independent of any choices, cf. \cite[Section 6]{D}, or for more detail cf. \cite[Proposition 5.14 (iii)]{BK} and \cite[page 376]{BK}, in which the independence of the choice in lattice in the Betti cohomology is discussed. 

Second, we describe the set of global points $H^0_\Q$, with the appropriate Tate twists:

$$ H^0_\Q(j):=\bigoplus_\lambda H^0(\Q,V_\lambda/T_\lambda(j)).$$

\begin{proof}[Proof of Theorem~\ref{Thm3}]
The proof of Theorem~\ref{Thm3} follows immediately from replacing the terms involving $L(f,j+1)$ by $\widetilde{C(j+1)}$, and appropriate normalizations.
\end{proof}

\section{Examples}\label{ExSec}

We conclude with examples which illustrate the results in this paper.

\subsection{Zeta function for the modular discriminant}

We consider the normalized Hecke eigenform $f=\Delta\in S_{12}(\Gamma_0(1))$. In this case, $\epsilon(f)=1$ and we have
\begin{align*}
R_{\Delta}(z)
\approx\ 
&
0.114379\cdot\left(\frac{36}{691}z^{10}+z^8+3z^6+3z^4+z^2+\frac{36}{691}\right)
\\
&
\ \ \ \ \ \ +0.00926927\cdot(4z^9+25z^7+42z^5+25z^3+4z).
\end{align*}
The ten zeros of $R_{\Delta}$ lie on the unit circle, and are approximated by the set
$$
\left\{
\pm i,-0.465\pm 0.885i,\
 -0.744\pm 0.668i,\
-0.911 \pm0.411i,\
-0.990\pm0.140i
\right\}.
$$
These are illustrated in the following diagram.
\begin{figure}[H]
\centering
\includegraphics[scale=0.5,keepaspectratio=true]{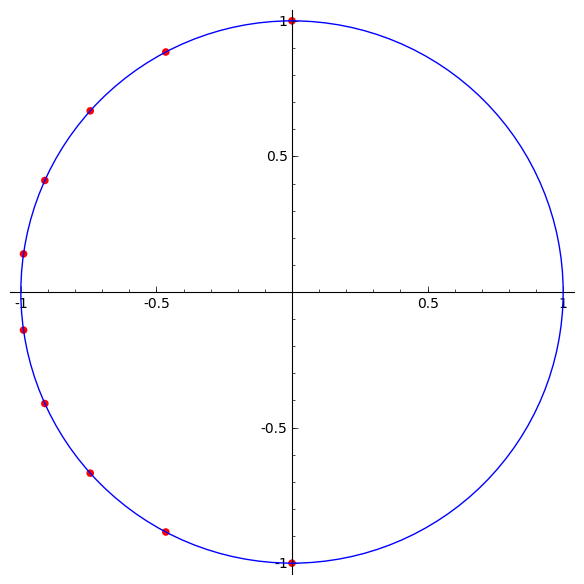}
\caption{The roots of $R_{\Delta}(z)$}
\end{figure}

By taking the Rodriguez-Villegas transform and letting $s\mapsto-s$ we find that 
\begin{align*}
&
Z_{\Delta}(s)\approx
(5.11\times 10^{-7})s^{10}
-(2.554\times 10^{-6})s^9 +(6.01\times 10^{-5})s^8
-( 2.25\times 10^{-4})s^7 
\\
&
+0.00180s^6 
-0.00463s^5+0.0155s^4
-0.0235s^3 +0.0310s^2
-0.0199 s
+
0.00596
  .
\end{align*} 
Theorem~\ref{Thm1} establishes that its zeros $\rho$ satisfy $\re(\rho)=1/2$; indeed, they are  approximately
\begin{align*}
\left\{
\frac12\pm 8.447i,
 \frac12\pm 5.002i,
\frac12 \pm 2.846i,
\frac12 \pm 1.352i,
\frac12 \pm 0.349i,
\right\}
,
\end{align*}
as illustrated in the next figure.
\begin{figure}[H]
\centering
\includegraphics[scale=0.5,keepaspectratio=true]{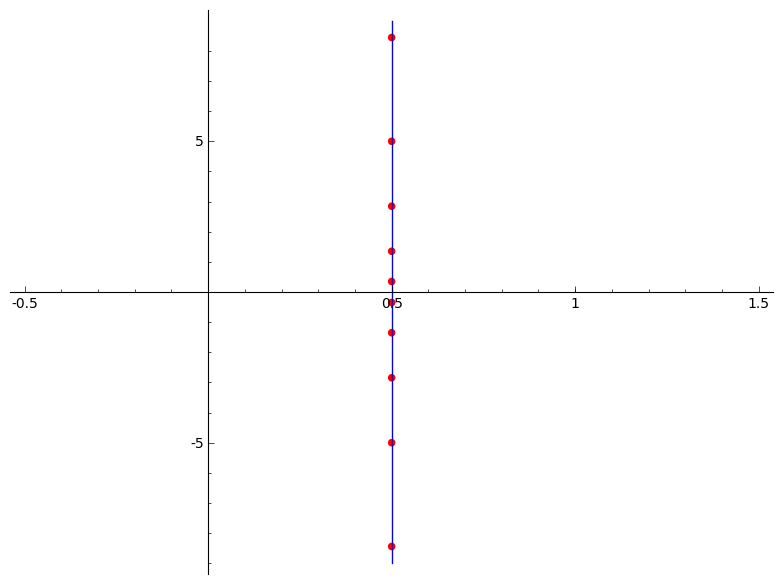}
\caption{The roots of $Z_{\Delta}(s)$}
\end{figure}

\subsection{Ehrhart polynomials and newforms of weight $6$}

Here we consider newforms $f\in S_6(\Gamma_0(N))$ with $\epsilon(f)=-1$.  
By Theorem~\ref{ZeroDistribution} (2), 
the roots of $Z_f(s)$ are closely related to the roots of the Ehrhart polynomial of the convex hull 
\[
\operatorname{conv}\left\{e_1,e_2,e_3,-e_1-e_2-e_3\right\}
.
\]  
The following image renders this tetrahedron. 
\begin{figure}[H]\label{FigureTetra}
\centering
\includegraphics[scale=0.5,keepaspectratio=true]{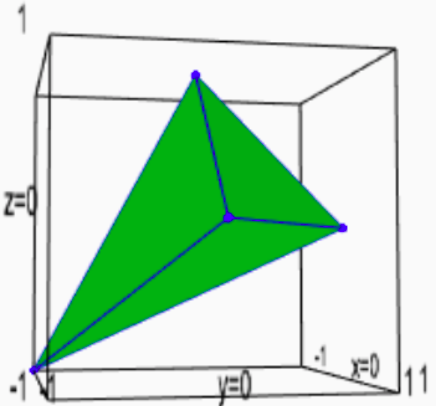}
\caption{The tetrahedron whose Ehrhart polynomial is $H_6^-(s)$.}
\end{figure}

The corresponding Ehrart polynomial counts the number of integer points in dilations of Figure \ref{FigureTetra}, and is given by the Rodriguez-Villegas transform of 
$1+x+x^2+x^3$. Namely, we have
\[
H_6^-(s)=
\binom{s+3}{3}+\binom{s+2}{3}+\binom{s+1}{3}+\binom{s}{3}
=\frac23s^3 + s^2 + \frac73s + 1
.
\]
Therefore, we find that
$$
\lim_{N\rightarrow +\infty} \widetilde{Z}_f(s)=\widetilde{H}^{-}_6(-s)= \left(s-\frac{1}{2}\right)
\left(s-\frac{1}{2}+\frac{\sqrt{-11}}{2}\right) \left(s-\frac{1}{2}-\frac{\sqrt{-11}}{2}\right),
$$
where the limit is over newforms $f\in S_6(\Gamma_0(N))$ with $\epsilon(f)=-1$, and where the polynomials have been normalized to have leading coefficient 1.



\begin{thebibliography}{99}

\bibitem{Berger}L. Berger, \emph{An introduction to the theory of p-adic representations}, Geometric aspects of Dwork theory. Vol. I, 255--292, Walter de Gruyter, 2004.

\bibitem{BHW} C. Bey, M. Henk, and J. Wills, \emph{Notes on the roots of Ehrhart polynomials}, Discrete and Comput. Geom. {\bf 38} (2007), 81--98.

\bibitem{B} B. Braun, \emph{Norm bounds for Ehrhart polynomial roots}, Discrete and Comput. Geom. {\bf 39} (2008), no. 1--3, 191--191.

\bibitem{BK} S. Bloch and K. Kato, \emph{$L$-functions and Tamagawa numbers of motives}, Grothendieck Festschrift, Vol 1 (1990), Birkh\"auser, 333--400.


\bibitem{CPZ} Y. Choie, Y. K. Park, and D. Zagier, \emph{Periods of modular forms on $\Gamma_0(N)$ and products of Jacobi theta functions},
preprint.

\bibitem{CFI} J.B. Conrey, D.W. Farmer, O. Imamo\u{g}lu, \emph{The nontrivial zeros of period polynomials of modular forms lie on the unit circle}, Int. Math. Res. Not. IMRN  2013,  no. 20, 4758--4771.

\bibitem{D} P. Deligne, \emph{Formes modulaires et repr\'{e}sentations $l$-adiques}, S\'{e}minaire Bourbaki
no 355, in Lecture Notes in Math., Springer, 179, 1971, 139--172.

\bibitem{D} N. Dummigan, \emph{Congruences of modular forms and Selmer groups}, Math. Res. Letters, no. 8, 479--494 (2001).

\bibitem{FG} E. M. Friedlander and D. R. Grayson, \emph{Handbook of $K$-Theory}, Springer, New York, 2000

\bibitem{ER} A. El-Guindy, W. Raji, \emph{Unimodularity of zeros of period polynomials of Hecke eigenforms}, Bull. Lond. Math. Soc.  46  (2014),  no. 3, 528--536.

\bibitem{GRV} P. Gunnells and F. Rodriguez-Villegas, \emph{Lattice polytopes, Hecke operators, and the Ehrhart polynomial} Sel. math., New ser. {\bf 13} (2007), 253--276.

\bibitem{JinMaOnoSound} S. Jin, W. Ma, K. Ono, and K. Soundararajan, \emph{The Riemann hypothesis for period polynomials of modular forms},
Proc. Natl. Acad. Sci., USA, {\bf 113} No. 10 (2016), 2603--2608.


\bibitem{Kn} M. Knopp, \emph{Some new results on the Eichler cohomology of automorphic forms}, Bull. Amer. Math. Soc. 80 (1974), 607--632.

\bibitem{KZ} W. Kohnen and D. Zagier, \emph{Modular forms with rational periods}, Modular forms
(Durham, 1983), Ellis Horwood Ser. Math. Appl.: Statst. Oper. Res., Horwood, Chichester, 1984, 197-249.



\bibitem{M} Y. I. Manin, \emph{Local zeta factors and geometries under ${\text {\rm Spec}} {\bf \ Z}$},
Izv. Russian Acad. Sci. (Volume dedicated to J.-P. Serre), accepted for publication, arXiv:1407.4969.


\bibitem{PP} V. Pa\c sol, A. Popa, \emph{Modular forms and period polynomials}, Proc. Lond. Math. Soc. (3)  107  (2013),  no. 4, 713--743.


\bibitem{RV} F. Rodriguez-Villegas, \emph{On the zeros of certain polynomials}, Proc. Amer. Math. Soc. \textbf{130}, No. 5 (2002), 2251-2254.

\bibitem{scholl} T. Scholl, \emph{Motives for modular forms}, Inventiones \textbf{100}, (1990), 419--430.

\bibitem{W} J.-L. Waldspurger, {\it Sur les valeurs de certaines fonctions L automorphes en leur centre de sym\'etrie}, Compositio
Math. 54 (1985), 173--242.

\bibitem{ZagHecke}  D. Zagier, \emph{Hecke operators and periods of modular forms}d, in
Festschrift in honor of I. I. Piatetski-Shapiro on the occasion of his sixtieth birthday, Part II (Ramat Aviv, 1989), volume 3 of Israel Math.
Conf. Proc., pages 321--336. Weizmann, Jerusalem, 1990.

\bibitem{Z} D. Zagier, \emph{Periods of modular forms and Jacobi theta functions}, Invent. Math. \textbf{104} (1991), 449-465.
\end{thebibliography}
\end{document}